\DeclareSymbolFont{cyrletters}{OT2}{wncyr}{m}{n}
\DeclareMathSymbol{\Sha}{\mathalpha}{cyrletters}{"58}
\newtheorem{thm}{Theorem}[section]
\newtheorem{cor}[thm]{Corollary}
\newtheorem{lem}[thm]{Lemma}
\theoremstyle{definition}
\newtheorem{defn}[thm]{Definition}
\theoremstyle{remark}
\newtheorem{rem}[thm]{Remark}
\numberwithin{equation}{section}
\begin{document}

\title[Matrix Weighted Martingale Transforms]{Martingale transforms, the dyadic shift and the Hilbert transform: a
sufficient condition for boundedness between matrix weighted spaces}%
\author{Robert Kerr}%
\address{Department of Mathematics, University Gardens, University of Glasgow, G12 8QW}%
\email{rkerr@maths.gla.ac.uk}%

\thanks{This work was completed with the support of the EPSRC. Part of this work was done while visiting the Fields Institute as part of the Thematic Program on New Trends in Harmonic Analysis}%

\subjclass[2000]{Primary 42A50} \keywords{Matrix Weights, Dyadic
Martingale Transforms, Dyadic Haar Shift, Two Weights}
\begin{abstract}
We give sufficient conditions on $N\times N$ matrix weights $U$ and
$V$ for the dyadic martingale transforms to be uniformly bounded
from $L^2(V)$ to $L^2(U)$. We also show that these conditions imply
the uniform boundedness of the dyadic shifts as well as the
boundedness of the Hilbert transform between these spaces.

\end{abstract}
\maketitle

\section{Introdution}

Much progress has been made recently on the two-weight problem for
various important operators, for example the Sawyer type
characterizations of F. Nazarov, S. Treil and A. Volberg, see e.g.
\cite{band}, and the two-weight inequalities for maximal singular
integrals by M. Lacey, E.T. Sawyer and I. Uriarte-Tuero \cite{ls}.
This is currently an area of much activity and new proofs with
broader scope and deeper insight are appearing. Little attention has
been given so far to understanding two-weight problems on
vector-valued function spaces (the work of C. M. Pereyra and N. H.
Katz \cite{kp} being a notable exception), in contrast to the
one-weight case, for which an analogue of the
Hunt-Muckenhoupt-Wheeden characterization has been shown in
\cite{wavelet} by S. Treil and A. Volberg. A sufficient condition
for the operator weight case has been given by S. Pott in
\cite{pott} and in \cite{gptv} it is shown that the dyadic operator
weight analogue of the matrix weight dyadic Hunt-Muckenhoupt-Wheeden
condition does not imply the boundedness of the martingale
transforms. We turn our attention firstly to conditions which imply
the uniform boundedness of dyadic martingale transforms and then to
other dyadic operators. The motivation here is that such dyadic
operators can often be used as models for more general singular
integral operators.

\section{The Martingale Transform}

 Let $\mathcal{D}$ denote the standard grid of dyadic subintervals of $\mathbb{R}$, $\mathcal{D} =  \left[k2^{-n},(k+1)2^{-n}\right)$ where
$n$ and $k$ range over the integers. The Haar functions associated
to a dyadic interval $I$ are defined as $h_I =
\frac{1}{\sqrt{I}}\left(\chi_{I_-} - \chi_{I_+}\right),$ where $I_-$
and $I_+$ are the largest proper dyadic subintervals of $I$, on the
right and the left respectively. The $h_I$ form an orthonormal basis
for $L^2(\mathbb{R})$. Let $L^2(\mathbb{R},\mathbb{C}^n)$ denote the
space of measurable functions
\[ \left\{ f:\mathbb{R} \rightarrow \mathbb{C}^n:\int_{\mathbb{R}} \langle f(t) ,
f(t) \rangle_{\mathbb{C}^n}dt < \infty\right\},\]
\[||f||_{L^2(\mathbb{R},\mathbb{C}^n)} = \left(\int_{\mathbb{R}}
\langle f(t) , f(t) \rangle_{\mathbb{C}^n}dt\right)^{\frac{1}{2}}.\]
We consider the operator $T_\sigma$ on
$L^2(\mathbb{R},\mathbb{C}^n)$ defined by the mapping
\[T_\sigma f \mapsto \sum_{I\in\mathcal{D}} \sigma(I)h_If_I\] where
$f_I = \int_I f h_I$ is the Haar coefficient for $I$ and $\sigma(I)
= \pm1$. The $T_\sigma$ are \emph{dyadic martingale transforms} and
are unitary operators on $L^2(\mathbb{R},\mathbb{C}^n)$. For a
matrix valued function $V$ which is positive and invertible almost
everywhere, let $L^2 (\mathbb{R},\mathbb{C}^n,V)$ be the space of
measurable functions
\[ \left\{f:\mathbb{R} \rightarrow \mathbb{C}^n:\int_{\mathbb{R}} \langle V(t)^{\frac{1}{2}}f(t),V(t)^{\frac{1}{2}}f(t)
\rangle dt < \infty\right\}\] with norm
\[||f||_{L^2(\mathbb{R},\mathbb{C}^n,V)} = \left(\int_{\mathbb{R}}\langle
V(t)^{\frac{1}{2}}f(t),V(t)^{\frac{1}{2}}f(t) \rangle
dt\right)^{\frac{1}{2}}.\] This generalizes the notion of weighted
$L^2$ spaces of scalar functions where a weight is a measurable
almost everywhere positive  function. We refer to matrix functions
which are measurable, almost everywhere positive and invertible as
\emph{matrix weights}. The purpose of this paper is to find
conditions on a pair of matrix weights, $U$ and $V$, which imply
that the dyadic martingale transforms are uniformly bounded from
$L^2 (\mathbb{R},\mathbb{C}^n,V)$ to
$L^2(\mathbb{R},\mathbb{C}^n,U)$.  This is equivalent to showing
that the operators $M_V^{-\frac{1}{2}}T_\sigma M_U^{\frac{1}{2}}$
are uniformly bounded on the unweighted space
$L^2(\mathbb{R},\mathbb{C}^n)$. The sufficient conditions we find on
a pair of matrix weights are a joint $A_2$ condition, a matrix
$A_\infty$ condition on one weight and a matrix reverse H\"{o}lder
condition on the other weight. We can also as a corollary replace
the matrix reverse H\"{o}lder condition by the matrix $A_\infty$
condition. The matrix $A_\infty$ and reverse H\"{o}lder condition
will be discussed in the next section. In what follows we will
denote $L^2 (\mathbb{R},\mathbb{C}^n,V)$ and $L^2
(\mathbb{R},\mathbb{C}^n,U)$ by $L^2(V)$ and $L^2(U)$.

\section{The $A_{2,0}$ condition and reverse H\"{o}lder}
\begin{defn}\label{revh}

A matrix weight $U$ satisfies the \emph{dyadic reverse H\"{o}lder
inequality} if there exists constants $C > 0$ and $r > 2$ such that
\[\int_I
||U^{\frac{1}{2}}(x)\left<U\right>_I^{-\frac{1}{2}}y||^rdx \leq
C|I|||y||^r
\]
holds for all dyadic intervals $I$ and nonzero vectors $y.$

\end{defn}

Note that our definition of the reverse H\"{o}lder property is in
general weaker the existing definition in the literature by Christ
and Goldberg \cite{goldberg}, but is equivalent for finite
dimensional spaces. Our definition generalizes the scalar version
and is in a form we find applicable.

\begin{defn}

We say that a matrix weight $U$ is in the $A_{2,0}$ class of weights
if following inequality holds uniformly over all intervals $I$;
\[ \det\langle U\rangle_I \leq C \exp\{\langle \log\det U
\rangle_I\}.\]

\end{defn}
This $A_{2,0}$ condition is a matrix analog of the scalar $A_\infty$
condition, see \cite{naztreil} for discussion on this. Also see
\cite{marc} for some reformulations and context of this property.

\begin{lem}\label{aor}

 If a matrix weight $U$ satisfies the $A_{2,0}$ condition, then it satisfies the
reverse H\"{o}lder inequality.
\end{lem}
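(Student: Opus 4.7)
\bigskip

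\textbf{Proof plan.} The plan is to reduce the matrix reverse H\"older inequality to a scalar reverse H\"older inequality for a suitable scalar weight built from $U$, and then to upgrade it using the averaging structure.

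First, I would use Minkowski's determinant inequality --- the fact that $A\mapsto(\det A)^{1/n}$ is concave on positive-definite matrices --- together with Jensen's inequality to obtain $\langle(\det U)^{1/n}\rangle_I\le(\det\langle U\rangle_I)^{1/n}$ for every dyadic $I$. Combining this with the $A_{2,0}$ hypothesis and taking $n$-th roots gives
\[
\langle(\det U)^{1/n}\rangle_I\le C^{1/n}\exp\bigl\langle\log(\det U)^{1/n}\bigr\rangle_I,
\]
which is precisely the scalar $A_\infty$ condition for the scalar weight $(\det U)^{1/n}$. Invoking the classical Coifman--Fefferman theorem, this yields an $s>1$ and $C'>0$ such that
\[
\frac{1}{|I|}\int_I(\det U)^{s/n}\,dx\;\le\;C'(\det\langle U\rangle_I)^{s/n}
\]
for all dyadic $I$.

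The second half of the proof is to convert this scalar reverse H\"older estimate into the matrix statement. Setting $\tilde U(x)=\langle U\rangle_I^{-1/2}U(x)\langle U\rangle_I^{-1/2}$, so that $\langle\tilde U\rangle_I$ is the identity, the desired bound becomes $\int_I\langle\tilde U(x)y,y\rangle^{r/2}\,dx\le C|I|\|y\|^r$ for some $r>2$. Writing $\tilde U(x)$ in its spectral decomposition, Jensen applied to the convex function $t\mapsto t^{r/2}$ with weights $|\langle y,v_i(x)\rangle|^2$ yields the pointwise inequality
\[
\langle\tilde U(x)y,y\rangle^{r/2}\le\|y\|^{r-2}\,\langle\tilde U(x)^{r/2}y,y\rangle,
\]
so it suffices to establish the operator-order bound $\langle\tilde U^{r/2}\rangle_I\le C\,I_n$ for some $r>2$. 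I would prove this by a stopping-time / Calder\'on--Zygmund argument on the dyadic subintervals of $I$, combining the scalar reverse H\"older for $\det\tilde U$, the trace normalization $\int_I\operatorname{tr}\tilde U\,dx=n|I|$, and the hereditary $A_{2,0}$ condition on sub-intervals $J\subseteq I$.

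The main obstacle is this final step. Scalar control of the product of eigenvalues of $\tilde U$ does not by itself bound its top eigenvalue; eigenvalues could in principle spread without changing $\det\tilde U$. The argument therefore has to use the averaging constraint $\langle\tilde U\rangle_I=I_n$ together with the $A_{2,0}$ property on subintervals to suppress eigenvalue-dispersion at all dyadic scales, which is where the self-improving character of the matrix $A_\infty$ assumption is crucial.
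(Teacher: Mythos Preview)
Your approach via the determinant is far more circuitous than necessary, and the final step you flag as the ``main obstacle'' is indeed a genuine gap: you have not given an argument that passes from reverse H\"older control of $(\det\tilde U)^{1/n}$ to the operator-order bound $\langle\tilde U^{r/2}\rangle_I\le C I_n$. The vague appeal to a stopping-time/Calder\'on--Zygmund construction, combined with the trace normalization and hereditary $A_{2,0}$, is not a proof. As you yourself note, determinant control says nothing about the top eigenvalue, and the mechanism by which the averaging constraint and the $A_{2,0}$ condition on subintervals should suppress eigenvalue dispersion is never made precise.

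The paper bypasses all of this with a one-line idea you have overlooked: rather than building a scalar weight from $\det U$, build it from a \emph{fixed direction}. The $A_{2,0}$ condition (via Lemmas~3.2 and~3.3 of Bownik~\cite{marc}) implies that for every nonzero vector $x$ the scalar weight $w_x(t)=\|U^{1/2}(t)x\|^2$ satisfies the scalar $A_\infty$ condition with a constant \emph{independent of $x$}. Scalar reverse H\"older then gives an exponent $r>1$, again uniform in $x$, with
\[
\Bigl(\frac{1}{|I|}\int_I\|U^{1/2}x\|^{2r}\Bigr)^{1/2r}\le C\Bigl(\frac{1}{|I|}\int_I\|U^{1/2}x\|^{2}\Bigr)^{1/2}.
\]
Now simply substitute $x=\langle U\rangle_I^{-1/2}y$; the right-hand side becomes $C\|y\|$ and the matrix reverse H\"older inequality drops out immediately. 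No spectral decomposition, no stopping time, no operator-order estimate on $\tilde U^{r/2}$ is needed. The moral: for this lemma, the right scalarization is directional ($x\mapsto\|U^{1/2}x\|^2$), not determinantal.
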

\begin{proof}
 If the weight $U$ has the $A_{2,0}$ condition, then by Lemma 3.2 and Lemma 3.3 of \cite{marc} we have that

\begin{equation}\label{uniformityx} \left\{\frac{1}{|I|}\int_I
||U^{\frac{1}{2}}x||^2\right\}^{\frac{1}{2}} \leq C \exp \{ \langle
\log ||U^{\frac{1}{2}}x|| \rangle_I\}\end{equation}
 for all nonzero
$x$ and intervals $I$. Consequently,
\[ \frac{1}{|I|}\int_I ||U^{\frac{1}{2}}x||^2 \leq C \exp \{ \langle \log ||U^{\frac{1}{2}}x||^2 \rangle_I\}\]
and thus the scalar weight $||U^{\frac{1}{2}}x||^2$ satisfies the
$A_\infty$ condition and hence a reverse H\"{o}lder inequality;
\[ \left\{\frac{1}{|I|}\int_I ||U^{\frac{1}{2}}x||^{2r}\right\}^{\frac{1}{2r}} \leq C \left\{\frac{1}{|I|}\int_I ||U^{\frac{1}{2}}x||^2\right\}^{\frac{1}{2}}\] for some $r > 1$, all intervals $I$ and all nonzero $x$.
Note that the index $r$ does not depend on $x$ because it only
depends on the $A_\infty$ constant $C$ in (\ref{uniformityx}), which
is uniform for all $x.$ As this is true for all nonzero $x$, we can
replace $x$ by $\langle U \rangle_I^{-\frac{1}{2}} y$, where $0 \neq
y \in \mathbb{C}^n$. Thus for all intervals $I \in \mathbb{R}$ and
$y \in \mathbb{C}^n$
\[\left\{\frac{1}{|I|}\int_I ||U^{\frac{1}{2}}\langle U \rangle_I^{-\frac{1}{2}}y||^{2r}\right\}^{\frac{1}{2r}} \leq C \left\{\frac{1}{|I|}\int_I ||U^{\frac{1}{2}}\langle U\rangle_I^{-\frac{1}{2}}y||^2\right\}^{\frac{1}{2}} = C||y||.\]
\end{proof}

\section{Boundedness of the martingale transform}

We are now in a position to state our main theorem concerning
sufficient conditions for the boundedness of the dyadic martingale
transforms:

\begin{thm}\label{maint2}
Let $U$ and $V$ be matrix weights satisfying the joint $A_2$
condition
\[ \left<V^{-1}\right>^{\frac{1}{2}}_I\left<U\right>_I\left<V^{-1}\right>^{\frac{1}{2}}_I < C\] for all dyadic intervals $I$, where $C$ is a constant multiple of the identity. If $V^{-1} \in A_{2,0}$ and $U$ satisfies the matrix reverse H\"{o}lder inequality, then the dyadic martingale transforms are uniformly
bounded from $L^2(V)$ to $L^2(U).$

\end{thm}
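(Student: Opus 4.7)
My plan is to use duality together with a Haar decomposition to reduce the problem to a bilinear estimate on the vector Haar coefficients, which I would then bound via two matrix Carleson-type embeddings. Since $T_\sigma$ is self-adjoint with respect to the unweighted pairing $\langle \cdot, \cdot\rangle = \int \langle \cdot, \cdot\rangle_{\mathbb{C}^n}\,dx$, and the dual of $L^2(U)$ with respect to this pairing is $L^2(U^{-1})$, the uniform bound $\|T_\sigma \colon L^2(V)\to L^2(U)\| \leq C$ (independent of $\sigma$) is equivalent to
\[
\sum_{I\in\mathcal{D}} |\langle f_I, g_I\rangle_{\mathbb{C}^n}| \;\leq\; C\,\|f\|_{L^2(V)}\,\|g\|_{L^2(U^{-1})},
\]
where $f_I = \int f\,h_I$ and $g_I = \int g\,h_I$; absorbing $\sigma(I)=\pm 1$ into the absolute value makes the uniformity in $\sigma$ automatic.

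For each $I$ I would split the inner product via a vector Cauchy--Schwarz against a positive matrix $W_I$,
\[
|\langle f_I, g_I\rangle| \leq \|W_I^{1/2} f_I\|\,\|W_I^{-1/2} g_I\|,
\]
and then apply Cauchy--Schwarz in $I$. Taking $W_I = \langle V\rangle_I$ (or the related choice $\langle V^{-1}\rangle_I^{-1}$ coming from operator Jensen) reduces the theorem to two matrix Carleson embeddings of the form
\[
\text{(A)}\quad \sum_I \|W_I^{1/2} f_I\|^2 \leq C\,\|f\|_{L^2(V)}^2, \qquad \text{(B)}\quad \sum_I \|W_I^{-1/2} g_I\|^2 \leq C\,\|g\|_{L^2(U^{-1})}^2.
\]
Embedding (A) is a one-weight-type matrix Carleson embedding driven by the $A_{2,0}$ hypothesis on $V^{-1}$: via Lemma \ref{aor} it supplies a reverse H\"older inequality for $V^{-1}$ (equivalently, a matrix $A_\infty$ property) which controls the size of $\|W_I^{1/2} f_I\|^2$ in terms of $\|f\|_{L^2(V)}^2$. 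For (B), the joint $A_2$ condition yields $W_I^{-1}\lesssim \langle U\rangle_I$ as operators, so (B) reduces to bounding $\sum_I \langle \langle U\rangle_I g_I, g_I\rangle$ by $\|g\|_{L^2(U^{-1})}^2$; the reverse H\"older inequality on $U$ is used precisely to replace the average $\langle U\rangle_I$ by pointwise values of $U$ (with a controlled power loss) so that the right-hand side recovers the $L^2(U^{-1})$ norm.

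The main obstacle is the proof of the two matrix Carleson embeddings. In the scalar case these follow from standard square-function/$A_\infty$ arguments, but matrix averages do not commute, so one cannot simply project onto a single direction. I would proceed by testing along a fixed unit vector $y\in\mathbb{C}^n$ to reduce each matrix embedding to a scalar Carleson inequality for the scalar weight $\|V^{1/2}(\cdot)y\|^2$ (or the analogous weight for $U$); the reverse H\"older/$A_{2,0}$ constants are uniform in $y$, precisely the uniformity exploited in the proof of Lemma \ref{aor} via estimate (\ref{uniformityx}). A Bellman-function or stopping-time argument in the spirit of Nazarov--Treil--Volberg, carried out over principal intervals on which $\|\langle V\rangle_I^{1/2} y\|$ stays comparable, then completes each fixed direction, and the uniformity in $y$ assembles the scalar estimates into the desired matrix statement.
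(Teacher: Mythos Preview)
Your bilinear reduction is sound and, with the choice $W_I=\langle V^{-1}\rangle_I^{-1}$, is exactly the paper's factorization
\[
M_V^{-\frac12}T_\sigma M_U^{\frac12}=\bigl(M_V^{-\frac12}D_{V^{-1}}^{-1}\bigr)\,T_\sigma\,\bigl(D_{V^{-1}}M_U^{\frac12}\bigr),
\]
so embeddings (A) and (B) are precisely the boundedness of $D_{V^{-1}}^{-1}M_{V^{-1}}^{1/2}$ and of $D_{V^{-1}}M_U^{1/2}$ on $L^2(\mathbb{R},\mathbb{C}^n)$, i.e.\ Theorem~\ref{naztreil} and Theorem~\ref{square} respectively. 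Your treatment of (A) via $V^{-1}\in A_{2,0}$ is correct.

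The gap is in (B). The joint $A_2$ condition $\langle V^{-1}\rangle_I^{1/2}\langle U\rangle_I\langle V^{-1}\rangle_I^{1/2}\le C$ is equivalent to $\langle V^{-1}\rangle_I\le C\langle U\rangle_I^{-1}$, \emph{not} to $W_I^{-1}=\langle V^{-1}\rangle_I\lesssim\langle U\rangle_I$; you have the inequality reversed. With the inequality as you stated it, (B) would reduce to
\[
\sum_{I}\bigl\langle \langle U\rangle_I\,g_I,\,g_I\bigr\rangle \;\lesssim\; \|g\|_{L^2(U^{-1})}^2,
\]
and this is simply false under a reverse H\"older hypothesis alone, already in the scalar case: take $u(x)=x^{-1/2}$ on $(0,1]$ (so $u\in A_1$, hence reverse H\"older) and $g=u$; then $\|g\|_{L^2(u^{-1})}^2=2$, while for $I=[0,2^{-n})$ one computes $g_I=2(\sqrt2-1)$ and $\langle u\rangle_I=2\cdot 2^{n/2}$, so the left side diverges. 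With the \emph{correct} direction $\langle V^{-1}\rangle_I\lesssim\langle U\rangle_I^{-1}$, (B) reduces to the boundedness of $D_U^{-1}M_U^{1/2}$, which by Theorem~\ref{naztreil} needs $U\in A_{2,0}$; the matrix reverse H\"older hypothesis is not known to imply this (Lemma~\ref{aor} only gives the opposite implication). Either way, the joint $A_2$ shortcut for (B) does not go through.

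Thus (B) is the genuine two-weight square function and must be attacked directly. The paper does this with a three-condition stopping time (controlling $\langle V^{-1}\rangle_J^{1/2}U\langle V^{-1}\rangle_J^{1/2}$, $\langle V^{-1}\rangle_J^{-1/2}V^{-1}\langle V^{-1}\rangle_J^{-1/2}$, and $\langle U\rangle_J^{-1/2}U\langle U\rangle_J^{-1/2}$ simultaneously) and Cotlar's Lemma; the reverse H\"older on $U$ enters only at one place, namely the H\"older step that produces geometric decay of $\int_{\cup\mathcal J_{k-1}}\|S_jf\|^2$ in $k-j$. Your ``fix a direction $y$ and reduce to scalar Carleson'' idea does not survive this step, because the vectors $f_I$, $g_I$ vary with $I$ and cannot be aligned with a single $y$; the paper's stopping time is what replaces this.
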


\begin{cor}\label{maint}
Let $U$ and $V$ be matrix weights satisfying a joint $A_2$ condition
\[ \left<V^{-1}\right>^{\frac{1}{2}}_I\left<U\right>_I\left<V^{-1}\right>^{\frac{1}{2}}_I < C\] for all dyadic intervals $I$, where $C$ is a constant multiple of the identity. If $U$ and
$V^{-1}$ are also in $A_{2,0}$, then the dyadic martingale
transforms are uniformly bounded from $L^2(V)$ to $L^2(U).$

\end{cor}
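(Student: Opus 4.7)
The plan is essentially to read this as a one-line deduction from Theorem \ref{maint2} and Lemma \ref{aor}. The two hypotheses of Corollary \ref{maint} compared with those of Theorem \ref{maint2} differ only in that the matrix reverse Hölder condition on $U$ has been replaced by $U \in A_{2,0}$, while the joint $A_2$ hypothesis and the $A_{2,0}$ hypothesis on $V^{-1}$ are identical. So the entire content of the corollary is the implication ``$A_{2,0}$ implies matrix reverse Hölder,'' applied to $U$.

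Concretely, I would begin by invoking Lemma \ref{aor} on the weight $U$: since $U \in A_{2,0}$, there exist constants $C > 0$ and $r > 2$ such that
\[
\int_I \bigl\| U^{1/2}(x)\langle U\rangle_I^{-1/2} y\bigr\|^r\,dx \le C|I|\,\|y\|^r
\]
for every dyadic interval $I$ and every nonzero vector $y \in \mathbb{C}^n$. This is exactly the matrix reverse Hölder inequality in the form of Definition \ref{revh}.

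With $U$ now known to satisfy matrix reverse Hölder, and with the joint $A_2$ condition and $V^{-1} \in A_{2,0}$ holding by assumption, all three hypotheses of Theorem \ref{maint2} are verified. Applying that theorem yields the uniform boundedness of the dyadic martingale transforms $T_\sigma : L^2(V) \to L^2(U)$, which is the required conclusion.

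There is no real obstacle here, since Lemma \ref{aor} is stated and proved earlier in the paper and Theorem \ref{maint2} is the main theorem one is allowed to cite; the corollary is a formal consequence of these two results. The only thing worth checking is that Lemma \ref{aor} is applied symmetrically (only to $U$, not to $V^{-1}$), so that the $A_{2,0}$ hypothesis on $V^{-1}$ is passed verbatim into Theorem \ref{maint2} without being weakened to reverse Hölder.
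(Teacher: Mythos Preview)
Your proposal is correct and matches the paper's own proof exactly: the paper simply writes ``By Lemma \ref{aor}, Theorem \ref{maint2} implies this corollary,'' which is precisely the argument you give. Your additional remark that Lemma \ref{aor} is applied only to $U$ (while the $A_{2,0}$ hypothesis on $V^{-1}$ is passed through unchanged) is correct and a helpful clarification.
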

\begin{proof}
By Lemma \ref{aor}, Theorem \ref{maint2} implies this corollary.

\end{proof}

Note that the conditions on the matrix weights $U$ and $V^{-1}$ are
symmetric in this corollary. Also in Theorem 6.1 of \cite{lacey} the
conditions and implications in Corollary \ref{maint} are stated but
specifically for the scalar valued function space setting, this is
also mentioned in \cite{bellman}.

\section{Proof of Theorem \ref{maint2} using a two-weighted dyadic square function}
We introduce the operator $D_{V^{-1}}$ defined by
\[D_{V^{-1}}f = D_{V^{-1}}\sum_{I \in \mathcal{D}} f_I
h_I(x) \mapsto \sum_{I \in \mathcal{D}}
\left<V^{-1}\right>^{\frac{1}{2}}_I f_I h_I(x)\] for functions with
finite Haar expansion.

Write $M_V^{-\frac{1}{2}}T_\sigma M_U^{\frac{1}{2}}$ as
$M_V^{-\frac{1}{2}}T_\sigma D^{-1}_{V^{-1}}
D_{V^{-1}}M_U^{\frac{1}{2}}$ and note that $T_\sigma$ and
$D^{-1}_{V^{-1}}$ commute. This allows us to estimate the norm as
\[||M_V^{-\frac{1}{2}}T_\sigma M_U^{\frac{1}{2}}|| = ||M_V^{-\frac{1}{2}}T_\sigma D^{-1}_{V^{-1}} D_{V^{-1}}M_U^{\frac{1}{2}}|| \leq ||M_V^{-\frac{1}{2}} D^{-1}_{V^{-1}}|| ||T_\sigma|| ||D_{V^{-1}}M_U^{\frac{1}{2}}||.\]
We know that $T_\sigma$ is bounded on unweighted $L^2$ so we are
interested in finding conditions on the matrix weights $U$ and
$V^{-1}$ that imply the boundedness of the operators
$M_V^{-\frac{1}{2}} D^{-1}_{V^{-1}}$ and
$D_{V^{-1}}M_U^{\frac{1}{2}}$ on unweighted
$L^2(\mathbb{R},\mathbb{C}^n).$

We deal with $D_{V^{-1}}M_U^{\frac{1}{2}}$, a two-weighted dyadic
square function, using a stopping time argument and Cotlar's Lemma.

\begin{thm}\label{square}
Let $U$ and $V^{-1}$ be matrix weights such that $U$ has the
\emph{dyadic reverse H\"{o}lder inequality} and such that for all
dyadic intervals $I,$
\[\left<V^{-1}\right>^{\frac{1}{2}}_I\left<U\right>_I\left<V^{-1}\right>^{\frac{1}{2}}_I < C.\] Then the
two-weighted square function $S = M_U^{\frac{1}{2}}D_{V^{-1}}$ is
bounded on $L^2(\mathbb{R},\mathbb{C}^n).$
\end{thm}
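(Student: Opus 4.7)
The plan is to decompose $S$ as a sum of block operators via a stopping time on $V^{-1}$ and then invoke the Cotlar--Stein lemma. Fix $\lambda > 1$ sufficiently large. For each dyadic interval $J$, let $\mathcal{G}(J)$ be the collection of maximal proper dyadic subintervals $I\subsetneq J$ at which either $\|\langle V^{-1}\rangle_I^{1/2}\langle V^{-1}\rangle_J^{-1/2}\|$ or its inverse exceeds $\lambda$, and iterate from a root to produce a tree $\mathcal{J}$ of stopping intervals. Let $F(J)$ be the dyadic intervals $I\subseteq J$ which are not contained in any element of $\mathcal{G}(J)$; on $F(J)$ the averages $\langle V^{-1}\rangle_I$ are comparable to $\langle V^{-1}\rangle_J$ up to the factor $\lambda^2$ in the operator order, and the sets $F(J)$ partition $\mathcal{D}$. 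Define
\[T_J f(x) \;=\; U^{1/2}(x)\sum_{I\in F(J)}\langle V^{-1}\rangle_I^{1/2}\,f_I\, h_I(x),\]
so that $Sf=\sum_{J\in\mathcal{J}} T_J f$ on functions with finite Haar expansion.

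For Cotlar--Stein, note first that for distinct $J,K\in\mathcal{J}$ the disjointness of $F(J)$ and $F(K)$ together with Haar orthogonality forces $T_J T_K^{*}=0$. This reduces the Cotlar--Stein hypotheses to a uniform diagonal bound $\|T_J\|\le C$ and summable decay of $\|T_J^{*}T_K\|^{1/2}$ over the tree. For the diagonal estimate, I would expand $\|T_J f\|_2^2$ into a double Haar sum, handle the on-diagonal contribution $I=I'$ using the joint $A_2$ hypothesis to bound it by $C\sum_{I\in F(J)}\|f_I\|^2$, and handle the off-diagonal nested contribution $I\subsetneq I'$ via a matrix Carleson embedding for the Haar coefficients $(U)_{I'}=\int h_{I'}U$. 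The reverse H\"older hypothesis on $U$, with exponent $r>2$, is precisely what supplies this Carleson embedding: it provides the slack above $L^2$ integrability needed to bound sums of the form $\sum_{I'\subseteq J}\|(U)_{I'}\|^2$ by $|J|\|\langle U\rangle_J\|^2$.

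The cross estimates $\|T_J^{*}T_K\|$ for comparable $J,K$ form the main obstacle. When $J\subsetneq K$, every $I\in F(K)$ strictly containing $J$ is constant on $J$, so $T_K f|_J(x)=U^{1/2}(x)c_J$ for a single constant vector $c_J$ built from the coefficients of $f$ and the operators $\langle V^{-1}\rangle_I^{1/2}$ at scales between $J$ and $K$. The operator $T_J^{*}T_K f$ can then be rewritten entirely in terms of the matrix Haar coefficients of $U$ on $F(J)$, weighted by $\langle V^{-1}\rangle_{I'}^{1/2}$ and acting on $c_J$. The same reverse-H\"older Carleson embedding bounds each such term, while the stopping time construction on $V^{-1}$ forces enough contraction per tree generation to yield a geometric decay of the form $\|T_J^{*}T_K\|^{1/2}\lesssim \lambda^{-cd(J,K)}$, where $d(J,K)$ is the generational distance in $\mathcal{J}$. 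Summing over the ancestor chain of $J$ gives a geometric series and Cotlar--Stein delivers $\|S\|\lesssim 1$. The hardest part is combining the matrix Carleson embedding (where non-commutativity obstructs the scalar argument) with the operator-level decay from the stopping time uniformly along chains of arbitrary depth in $\mathcal{J}$.
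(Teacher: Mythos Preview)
Your outline shares the paper's skeleton—stopping-time decomposition plus Cotlar--Stein, with one family of cross terms vanishing by Haar orthogonality—but the two places where the actual work happens are not correctly handled. First, your stopping rule is based only on oscillation of $\langle V^{-1}\rangle$. The paper stops on \emph{three} conditions, including $\|\langle U\rangle_J^{-1/2}\langle U\rangle_I\langle U\rangle_J^{-1/2}\|>\lambda$ and the mixed condition $\|\langle V^{-1}\rangle_J^{1/2}\langle U\rangle_I\langle V^{-1}\rangle_J^{1/2}\|>\lambda$; these are what allow both weights to be frozen on $F(J)$ and yield the diagonal bound $\|S_jf\|\lesssim\|\triangle_jf\|$ by direct pointwise estimates. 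Your alternative route via a ``matrix Carleson embedding'' for $(U)_{I'}$ is not obviously available: the reverse H\"older hypothesis here is a vector-wise bound on $\|U^{1/2}\langle U\rangle_J^{-1/2}y\|$ for each fixed $y$, not a control on operator norms of Haar coefficients of $U$, and extracting a Carleson condition in the matrix setting is itself nontrivial.

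More seriously, the claimed cross-term decay $\|T_J^{*}T_K\|^{1/2}\lesssim\lambda^{-c\,d(J,K)}$ ``from the stopping time construction on $V^{-1}$'' has no mechanism behind it. Jumps of $\langle V^{-1}\rangle$ at successive generations need not compound contractively (they can go in either direction, and in the matrix case not along a common axis), and the weight appearing in $T_J^{*}T_K$ is $U$, not $V^{-1}$. The paper's decay comes from a completely different idea: the stopping time is \emph{decaying in measure}, $|\bigcup\mathcal{J}_k(J)|\le\delta^k|J|$, and one applies H\"older with exponent $p=r/2>1$ (the $r$ from the reverse H\"older hypothesis) to write
\[
\int_{\mathcal{J}_{k-j-1}(J)}\bigl\|U^{1/2}(x)\langle U\rangle_J^{-1/2}v\bigr\|^{2}\,dx
\;\le\;\Bigl(\int_J\bigl\|U^{1/2}\langle U\rangle_J^{-1/2}v\bigr\|^{r}\Bigr)^{2/r}\bigl|\mathcal{J}_{k-j-1}(J)\bigr|^{1/q}.
\]
Reverse H\"older controls the first factor by $C|J|^{2/r}\|v\|^{2}$, and the second factor contributes $\delta^{(k-j-1)/q}|J|^{1/q}$. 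This H\"older--small-measure pairing is the genuine source of geometric decay and is absent from your proposal.
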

For $M_V^{-\frac{1}{2}} D^{-1}_{V^{-1}}$ we state without proof a
theorem of Nazarov and Treil.
\begin{thm}\label{naztreil}
Let $U$ be a matrix weight such that $W \in A_{2,0}$. Then
$D_{W}^{-1}M^{\frac{1}{2}}_{W}$ is bounded on $L^2(\mathbb{R},
\mathbb{C}^n)$.
\end{thm}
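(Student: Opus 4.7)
Since $Sf(x) = U^{1/2}(x)\sum_I \langle V^{-1}\rangle_I^{1/2} f_I h_I(x)$, expanding the norm gives
\[ \|Sf\|_{L^2}^2 = \sum_{I,J \in \mathcal{D}} \bigl\langle M_{I,J}\, \langle V^{-1}\rangle_I^{1/2} f_I,\ \langle V^{-1}\rangle_J^{1/2} f_J\bigr\rangle, \qquad M_{I,J} := \int_{\mathbb{R}} U(x)\, h_I(x) h_J(x)\,dx. \]
The $I=J$ diagonal contributes $\sum_I \bigl\langle \langle U\rangle_I \langle V^{-1}\rangle_I^{1/2} f_I,\, \langle V^{-1}\rangle_I^{1/2} f_I\bigr\rangle$, which is immediately controlled by $C\|f\|^2$ via the joint $A_2$ hypothesis. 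The entire difficulty is therefore to absorb the off-diagonal terms, which arise from the oscillation of $U$ inside dyadic intervals.

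The approach is a Carleson-type stopping time argument combined with Cotlar's lemma, in the spirit of Nazarov--Treil--Volberg and Pott. Fix a large constant $\lambda > 1$. Starting from a base interval $I_0$ (the global case then follows by exhaustion), build a tree $\mathcal{F} \subset \mathcal{D}$ recursively: for $K \in \mathcal{F}$, its children in $\mathcal{F}$ are the maximal dyadic $K' \subset K$ with $K' \neq K$ satisfying either $\|\langle U\rangle_K^{-1/2}\langle U\rangle_{K'}\langle U\rangle_K^{-1/2}\| > \lambda$ or $\|\langle V^{-1}\rangle_K^{-1/2}\langle V^{-1}\rangle_{K'}\langle V^{-1}\rangle_K^{-1/2}\| > \lambda$. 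The dyadic reverse H\"older hypothesis on $U$, combined with joint $A_2$ (which converts control of $U$ into control of $V^{-1}$), should deliver a Carleson packing estimate $\sum_{K' \text{ child of } K} |K'| \leq \epsilon |K|$ for some $\epsilon = \epsilon(\lambda) < 1$, so $\mathcal{F}$ is sparse. Between a principal $K$ and its $\mathcal{F}$-children, operator-norm comparability of all intermediate dyadic averages of $U$ and $V^{-1}$ to those on $K$ holds by construction.

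With $\mathcal{F}$ in hand I decompose $S = \sum_{n\ge 0} S_n$, grouping Haar modes $h_I$ by the generation $n$ of the minimal $\mathcal{F}$-ancestor of $I$. Within a single principal $K$, comparability lets me replace $\langle V^{-1}\rangle_I^{1/2}$ by $\langle V^{-1}\rangle_K^{1/2}$ up to a bounded factor, reducing $S_n$ to an essentially block-diagonal operator over generation-$n$ principals, for which $\|S_n\|^2 \leq C$ follows directly from joint $A_2$ and orthonormality of Haar functions. The crucial almost-orthogonality $\|S_n^* S_m\| \leq C\lambda^{-c|n-m|}$ demanded by Cotlar then comes from the geometric decay built into the stopping rule: a Haar mode of principal generation $m > n$ sits in a region where the ambient matrix averages have already moved by a factor comparable to $\lambda^{m-n}$ relative to generation $n$, and this decay survives the pairing through $M_{I,J}$.

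The main obstacle will be the matrix non-commutativity throughout. In the scalar case this stopping-time/reverse-H\"older mechanism is essentially transparent, but here one must choose the correct matrix functional of $\langle U\rangle_{K'}$ versus $\langle U\rangle_K$ so that the tree $\mathcal{F}$ still has small packing, and one must pass back and forth between operator-norm control and the pointwise $L^r$ inequality of Definition \ref{revh} without incurring uncontrolled dimension-dependent losses. This is precisely why the stronger hypothesis of the dyadic reverse H\"older inequality on $U$ (rather than only $A_{2,0}$) is imposed; and making sure that the geometric gain $\lambda^{-|n-m|}$ genuinely survives the iteration of matrix-valued Haar coefficients when verifying the Cotlar hypothesis is, to my mind, the technical heart of the argument.
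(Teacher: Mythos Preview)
You have attacked the wrong statement. The operator you analyze is
\[
Sf(x) = U^{1/2}(x)\sum_{I}\langle V^{-1}\rangle_I^{1/2} f_I h_I(x) = M_U^{1/2}D_{V^{-1}}f,
\]
which is the two-weighted square function of Theorem~\ref{square}, and indeed all of the hypotheses you invoke (joint $A_2$ for the pair $(U,V)$, dyadic reverse H\"older for $U$) are those of Theorem~\ref{square}. The statement you were asked to prove, Theorem~\ref{naztreil}, concerns a different operator, $D_W^{-1}M_W^{1/2}$, built from a \emph{single} weight $W$, and the only hypothesis available is $W\in A_{2,0}$. There is no second weight, no joint $A_2$, and no reverse H\"older assumption in the statement. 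Your stopping-time tree, which selects intervals where $\langle U\rangle_{K'}$ or $\langle V^{-1}\rangle_{K'}$ has moved by a factor $\lambda$, is therefore not even formulated for the data at hand.

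As for the paper's own treatment: Theorem~\ref{naztreil} is not proved in the paper at all. It is quoted as Theorem~7.8 of Nazarov--Treil, and the paper explicitly notes that the proof there uses a Bellman function technique, not a stopping-time/Cotlar argument. (The paper's Remark after the proof of Theorem~\ref{square} even says that the authors would have liked to avoid dependence on this cited result but were unable to.) So even as a sketch there is nothing to compare: your plan addresses the wrong operator, and the actual result is imported from the literature via a method---Bellman functions---orthogonal to the one you outline. If your intention was really to prove Theorem~\ref{square}, then yes, your plan is broadly the same as the paper's proof of that theorem; but that is not what was asked.
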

\begin{proof}
This is Theorem 7.8 of \cite{naztreil}. Note that the proof of this
theorem uses a Bellman function technique.
\end{proof}
This theorem also applies to $M_V^{-\frac{1}{2}}D^{-1}_{V^{-1}}$ if
we note that its adjoint is $D^{-1}_{V^{-1}}M_V^{-\frac{1}{2}}$.

We now introduce the stopping time used in the proof of Theorem
\ref{square}.
\subsection{Stopping Time}  Let $\lambda > 1$ and let
$\mathcal{J}_{\lambda,1}(J)$ be the collection of maximal dyadic
subintervals $I_{\lambda}$ of $J$ such that
\begin{equation}\label{stone} ||\frac{1}{|I_{\lambda}|} \int_{I_{\lambda}}
\left<V^{-1}\right>_J^{\frac{1}{2}}U(x)\left<V^{-1}\right>_J^{\frac{1}{2}}dx||
> \lambda
\end{equation}
or
\begin{equation}\label{sttwo}  ||\frac{1}{|I_{\lambda}|} \int_{I_{\lambda}}
\left<V^{-1}\right>_J^{-\frac{1}{2}}V^{-1}(x)\left<V^{-1}\right>_J^{-\frac{1}{2}}dx||
> \lambda
\end{equation}
or
\begin{equation}\label{stthree}  ||\frac{1}{|I_{\lambda}|} \int_{I_{\lambda}}
\left<U\right>_J^{-\frac{1}{2}}U(x)\left<U\right>_J^{-\frac{1}{2}}dx||
> \lambda.
\end{equation}
Then we define $\mathcal{J}_{\lambda,k}(J)$ as $\cup_{I \in
\mathcal{J}_{\lambda,k-1}(J)} \mathcal{J}_{\lambda,1}(I)$ for $k >
1.$ Let $\mathcal{F}_{\lambda,1}(J)$ be the collection of those
dyadic subintervals of $J$ which are not a subinterval of any
interval in $\mathcal{J}_{\lambda,1}(J)$. We likewise define
$\mathcal{F}_{\lambda,k}(J)$ iteratively to be $\cup_{I \in
\mathcal{J}_{\lambda,k-1}(J)} \mathcal{F}_{\lambda,1}(I).$ Then
$\cup_k \mathcal{F}_{\lambda,k}(I)$ forms a decomposition of the
dyadic subintervals of $I.$

\begin{lem}{\label{dec}}

If $U$ and $V$ are matrix weights such that for some $C > 0$
\[ \left<V^{-1}\right>^{\frac{1}{2}}_I\left<U\right>_I\left<V^{-1}\right>^{\frac{1}{2}}_I < C\] for all dyadic intervals $I$, then $\mathcal{J}$ is a decaying
stopping time for some $\lambda>1$. By decaying stopping time, we
mean that for a sufficiently large $\lambda$, we have a constant
$0<\delta<1$ such that $|\mathcal{J}(I)_{\lambda,k}| \leq
\delta^k|I|$ for all $k$. We call this conditions on the two matrix
weights the joint $A_2$ condition.

\end{lem}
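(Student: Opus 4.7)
The plan is to handle each of the three stopping conditions (\ref{stone}), (\ref{sttwo}), (\ref{stthree}) separately via a Calderón--Zygmund-style weak-type bound, sum the three contributions, and then iterate. The essential observation is that in all three cases, the positive matrix-valued integrand has been \emph{normalized}: a direct computation shows
\[\frac{1}{|J|}\int_J \langle V^{-1}\rangle_J^{-\frac{1}{2}} V^{-1}(x) \langle V^{-1}\rangle_J^{-\frac{1}{2}}dx = I_{n\times n}\]
(and likewise for the third condition), while the joint $A_2$ hypothesis gives
\[\frac{1}{|J|}\int_J \langle V^{-1}\rangle_J^{\frac{1}{2}}U(x)\langle V^{-1}\rangle_J^{\frac{1}{2}}dx = \langle V^{-1}\rangle_J^{\frac{1}{2}}\langle U\rangle_J \langle V^{-1}\rangle_J^{\frac{1}{2}} < C\cdot I_{n\times n}.\]
So in each case I have a positive matrix-valued function $A(x)$ on $J$ whose average has operator norm at most $C' \in \{1,1,C\}$.

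Next I would exploit the basic fact that for a positive semidefinite $n\times n$ matrix $M$, $\|M\| \leq \operatorname{tr}(M) \leq n\|M\|$. The maximal dyadic intervals $I_\lambda \subset J$ selected for condition $(\ell)$ are pairwise disjoint and satisfy $\|\langle A\rangle_{I_\lambda}\| > \lambda$, hence
\[\lambda |I_\lambda| < \|\langle A\rangle_{I_\lambda}\| \cdot |I_\lambda| \leq \operatorname{tr}\left(\int_{I_\lambda} A(x)\,dx\right).\]
Summing over the disjoint $I_\lambda$'s and using $\operatorname{tr}(\int_J A) \leq n C' |J|$ gives $\sum_{I_\lambda} |I_\lambda| \leq \frac{n C'|J|}{\lambda}$ for each condition. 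Combining the three bounds,
\[|\mathcal{J}_{\lambda,1}(J)| \leq \frac{n(2+C)}{\lambda}|J|.\]
Choosing $\lambda > n(2+C)$ makes $\delta := n(2+C)/\lambda < 1$, which is the base case.

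For the iteration I would observe that the definition of $\mathcal{J}_{\lambda,k+1}(J)$ restarts the stopping time at each $I \in \mathcal{J}_{\lambda,k}(J)$, re-normalizing by $\langle V^{-1}\rangle_I$ and $\langle U\rangle_I$; since the joint $A_2$ condition is assumed uniformly over \emph{all} dyadic intervals, the same base estimate gives $|\mathcal{J}_{\lambda,1}(I)| \leq \delta |I|$. Induction then yields $|\mathcal{J}_{\lambda,k}(J)| \leq \delta^k |J|$.

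I do not expect a serious obstacle: the argument is a clean matrix analog of the scalar weak-type $(1,1)$ stopping-time bound. The one potentially fussy point is verifying that the operator-norm stopping inequality can be legitimately traded for a trace inequality, but this is immediate because each integrand is positive semidefinite, so its integral averages are as well, and $\|\cdot\| \leq \operatorname{tr}(\cdot)$ on the positive cone.
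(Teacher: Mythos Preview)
Your proposal is correct and follows essentially the same route as the paper: both arguments treat the three stopping conditions separately, use the equivalence of the operator norm and trace on positive matrices together with additivity of the trace to run a weak-$(1,1)$ style estimate on the disjoint stopping intervals, and then iterate. Your write-up is in fact slightly more explicit than the paper's, which handles condition (\ref{stthree}) in detail and then appeals to ``a similar argument'' for (\ref{stone}) and (\ref{sttwo}) plus the remark that a finite union of decaying stopping times is again decaying; you instead compute the three normalizations up front and combine them into the single bound $n(2+C)/\lambda$.
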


\begin{proof}
We first restrict ourselves to showing that
$\mathcal{J}_{\lambda,k}(J)=\cup_{I \in
\mathcal{J}_{\lambda,k-1}(J)} \mathcal{J}_{\lambda,1}(I)$ is a
decaying stopping time when $\mathcal{J}_{\lambda,1}(I)$ is defined
as the collection of maximal subintervals of $I$ satisfying only
(\ref{stthree}) rather than all three conditions.

We have the following series of inequalities;
\[ |I| \geq ||\int_I \langle U \rangle_I^{-\frac{1}{2}} U(x)  \langle U
\rangle_I^{-\frac{1}{2}} dx|| \geq ||\sum_{J
\in\mathcal{J}_{\lambda,1}} \int_{J}\langle U
\rangle_I^{-\frac{1}{2}} U(x) \langle U \rangle_I^{-\frac{1}{2}}dx||
\]\[\geq C_n\sum_{J \in \mathcal{J}_{\lambda,1}}
||\int_{J}\langle U \rangle_I^{-\frac{1}{2}} U(x) \langle U
\rangle_I^{-\frac{1}{2}}||,\]

where $C_n$ is a constant dependent on the matrix. This is possible
due to the equivalence of all matrix norms and the additivity of the
trace norm on positive matrices. By (\ref{stthree}),
\[C_n\sum_{J\in\mathcal{J}_{\lambda,1}}
||\int_{J}\langle U \rangle_I^{-\frac{1}{2}} U(x) \langle U
\rangle_I^{-\frac{1}{2}}|| \geq
C_n\lambda\sum_{J\in\mathcal{J}_{\lambda,1}}|J|\] and hence
\[\frac{1}{\lambda C_n} |I| \geq \sum_{J\in\mathcal{J}_{\lambda,1}}|J|= |\mathcal{J}_{\lambda,1}|.\]

Thus we can choose $\lambda$ to be large enough such that
$\frac{1}{\lambda C_n} < 1$ and we have
$|\mathcal{J}_{\lambda,1}(I)| < \delta|I|$. Iteration now yields
that $|\mathcal{J}_{\lambda,k}(I)| < \delta^k |I|.$ We use a similar
argument for \ref{stone} and \ref{sttwo} individually and then note
that the finite union of decaying stopping times will also be a
decaying stopping time, after a possible change of $\lambda$.

\end{proof}

\subsection{Proof of Theorem \ref{square}}

This proof of this theorem is where the core of our technical
analysis takes place, it draws from Theorem 3.1 in \cite{pott}. We
are presenting a generalization for the finite dimensional case.

\begin{proof}

We choose $\lambda > 0$ such that the condition $\mathcal{J}$ in
\ref{dec} is a decaying stopping time. First note that almost
everywhere on $J \backslash \cup \mathcal{J}(J)$:
\[\left<V^{-1}\right>_J^{\frac{1}{2}}U(x)\left<V^{-1}\right>_J^{\frac{1}{2}}
\leq \lambda\]
\[
\left<U\right>_J^{-\frac{1}{2}}U(x)\left<U\right>_J^{-\frac{1}{2}}
\leq \lambda
\]
and
\[
\left<V^{-1}\right>_J^{-\frac{1}{2}}V^{-1}(x)\left<V^{-1}\right>_J^{-\frac{1}{2}}
\leq \lambda.
\]
In this context $\lambda$ stands for the identity matrix scaled by
$\lambda$, and the inequalities are matrix inequalities. Let us take
$f \in L^2(\mathbb{R}, \mathbb{C}^n)$ with finite Haar expansion.
Assume without loss that $f$ is supported in the unit interval. We
write $\mathcal{J}_j$ and $\mathcal{F}_j$ for
$\mathcal{J}_{\lambda,j}([0,1])$ and
$\mathcal{F}_{\lambda,j}([0,1]).$

Define
\[\triangle_j f = \sum_{K\in\mathcal{F}_j}h_Kf_K\]
and
\[S_jf = S\triangle_j f =
U^{\frac{1}{2}}\sum_{K\in\mathcal{F}_j}\left<V^{-1}\right>^{\frac{1}{2}}_Kh_Kf_K.\]
We can check that $\sum_{j=1}^{\infty} \triangle_jf = f$ and also
that
\[\sum_{j=1}^{\infty} S_jf = Sf.\]
We show that $S$ is bounded using Cotlar's Lemma. First note that
\[||S_jf||_{L^2}^2 = \int_{\cup \mathcal{J}_{j-1}}
||S_jf||_{\mathbb{C}^n}^2dx = \int_{\cup \mathcal{J}_{j-1} \setminus
\cup \mathcal{J}_j} ||S_jf||_{\mathbb{C}^n}^2dx + \int_{\cup
\mathcal{J}_j} ||S_jf||_{\mathbb{C}^n}^2dx.\] We estimate
$\int_{\cup \mathcal{J}_{j-1} \setminus \cup \mathcal{J}_j}
||S_jf||_{\mathbb{C}^n}^2dx$ and then $\int_{\cup \mathcal{J}_j}
||S_jf||_{\mathbb{C}^n}^2dx.$

\begin{align*}
& \int_{\cup \mathcal{J}_{j-1} \setminus \cup \mathcal{J}_j}
||S_jf||_{\mathbb{C}^n}^2dx = \sum_{J \in \mathcal{J}_{j-1}} \int_{J
\setminus \cup \mathcal{J}(J)}||S_jf||_{\mathbb{C}^n}^2dx
\\&=\sum_{J \in \mathcal{J}_{j-1}} \int_{J \setminus \cup
\mathcal{J}(J)}||U^{\frac{1}{2}}(x)\sum_{K\in\mathcal{F}(J)}\left<V^{-1}\right>_K^{\frac{1}{2}}h_K(x)f_K||_{\mathbb{C}^n}^2dx
\\&=\sum_{J \in \mathcal{J}_{j-1}} \int_{J \setminus \cup
\mathcal{J}(J)}||U^{\frac{1}{2}}(x)\left<V^{-1}\right>_J^{\frac{1}{2}}\left<V^{-1}\right>_J^{-\frac{1}{2}}\sum_{K\in\mathcal{F}(J)}\left<V^{-1}\right>_K^{\frac{1}{2}}h_K(x)f_K||_{\mathbb{C}^n}^2dx
\\&\leq\sum_{J \in \mathcal{J}_{j-1}} \int_{J \setminus \cup
\mathcal{J}(J)}||U^{\frac{1}{2}}(x)\left<V^{-1}\right>_J^{\frac{1}{2}}||^2
||\left<V^{-1}\right>_J^{-\frac{1}{2}}\sum_{K\in\mathcal{F}(J)}\left<V^{-1}\right>_K^{\frac{1}{2}}h_K(x)f_K||_{\mathbb{C}^n}^2dx
\\&\leq\sum_{J \in \mathcal{J}_{j-1}} \int_{J \setminus \cup
\mathcal{J}(J)}\lambda
||\left<V^{-1}\right>_J^{-\frac{1}{2}}\sum_{K\in\mathcal{F}(J)}\left<V^{-1}\right>_K^{\frac{1}{2}}h_K(x)f_K||_{\mathbb{C}^n}^2dx
\\&\leq\sum_{J \in \mathcal{J}_{j-1}} \int_{J }\lambda \sum_{K\in\mathcal{F}(J)}
||\left<V^{-1}\right>_J^{-\frac{1}{2}}\left<V^{-1}\right>_K^{\frac{1}{2}}||^2
||f_K||_{\mathbb{C}^n}^2dx
\\&\leq\sum_{J \in \mathcal{J}_{j-1}} \int_{J}\lambda \sum_{K\in\mathcal{F}(J)} \lambda
||f_K||_{\mathbb{C}^n}^2dx
\end{align*}
since for $K \in \mathcal{F}(J),$
\[\left<V^{-1}\right>_J^{-\frac{1}{2}}\left<V^{-1}\right>_K\left<V^{-1}\right>_J^{-\frac{1}{2}}
=
\frac{1}{|K|}\int_K\left<V^{-1}\right>_J^{-\frac{1}{2}}V^{-1}(x)\left<V^{-1}\right>_J^{-\frac{1}{2}}
\leq \lambda.\] Thus
\[||S_jf||_{L^2}^2 \leq \sum_{J \in
\mathcal{J}_{j-1}} \int_{J }\lambda \sum_{K\in\mathcal{F}(J)}
\lambda ||f_K||_{\mathbb{C}^n}^2dx  \]
\[=\sum_{J \in
\mathcal{J}_{j-1}} \lambda^2 \sum_{K\in\mathcal{F}(J)} \int_{J}
||f_K||_{\mathbb{C}^n}^2dx = \lambda^2 ||\triangle_jf||_{L^2}.\]

We now consider
\[\int_{\cup \mathcal{J}_j}
||S_jf||_{\mathbb{C}^n}^2dx = \sum_{J\in \mathcal{J}_{j-1}} \sum_{I
\in \mathcal{J}(J)} \int_I ||U^{\frac{1}{2}}(x)\sum_{K\in
\mathcal{F}(J)}
\left<V^{-1}\right>^{\frac{1}{2}}f_Kh_K(x)||_{\mathbb{C}^n}^2dx\] As
$h_K$ is constant on $I \in \mathcal{J}(J)$ for $K \in
\mathcal{F}(J),$ this is equal to
\[\sum_{J\in \mathcal{J}_{j-1}} \sum_{I
\in \mathcal{J}(J)} \int_I
||\left<U\right>^{\frac{1}{2}}_I\sum_{K\in \mathcal{F}(J)}
\left<V^{-1}\right>_K^{\frac{1}{2}}f_Kh_K||_{\mathbb{C}^n}^2dx
\]
\[\leq\sum_{J\in \mathcal{J}_{j-1}} \sum_{I
\in \mathcal{J}(J)} \int_I
||\left<U\right>^{\frac{1}{2}}_I\left<U\right>^{-\frac{1}{2}}_J||_{\mathbb{C}^n}^2
||\left<U\right>^{\frac{1}{2}}_J\sum_{K\in \mathcal{F}(J)}
\left<V^{-1}\right>_K^{\frac{1}{2}}f_Kh_K||_{\mathbb{C}^n}^2dx\leq
2\lambda^2||\triangle_jf||_{L^2}^2.\]

We have shown that there is a constant $C$ such that $||S_jf||^2
\leq C||\triangle_jf||^2.$ Let us now show that there exists a
constant $C^\prime$ and $0 < d < 1$ such that for $k >j,$
\[\int_{\cup \mathcal{J}_{k-1}} ||S_jf||^2dx \leq C^\prime d^{k-j}
||\triangle_jf||^2.\] Cotlar's Lemma (see \cite{pereyra}) then
implies that $S = \sum S_j$ is bounded. Note that
\[\int_{\cup \mathcal{J}_{k-1}} ||S_jf||^2dx =
\sum_{J\in\mathcal{J}_j}\sum_{I\in\mathcal{J}_{k-j-1}(J)}\int_I
||U^{\frac{1}{2}}(x)\sum_{L\in \mathcal{J}_{j-1}}\sum_{K\in
\mathcal{F}(L)}
\left<V^{-1}\right>^{\frac{1}{2}}_Kh_K(x)f_K||_{\mathbb{C}^n}^2dx\]

Note that $\sum_{L\in \mathcal{J}_{j-1}}\sum_{K\in \mathcal{F}(L)}
\left<V^{-1}\right>^{\frac{1}{2}}_Kh_Kf_K$ is constant on $J \in
\mathcal{J}_j,$ and denote this constant by $M_Jf$. The above
expression is equal to
\begin{align*}
&\sum_{J\in\mathcal{J}_j}\sum_{I\in\mathcal{J}_{k-j-1}(J)}
|I|||\left<U\right>_I^{\frac{1}{2}}M_Jf||_{\mathbb{C}^n}^2
\\&=\sum_{J^{\prime} \in
\mathcal{J}_{j-1}}\sum_{J\in\mathcal{J}(J^\prime)}\sum_{I\in\mathcal{J}_{k-j-1}(J)}|I|||\left<U\right>_I^{\frac{1}{2}}M_Jf||_{\mathbb{C}^n}^2
\\&=\sum_{J^{\prime} \in
\mathcal{J}_{j-1}}\sum_{J\in\mathcal{J}(J^\prime)}\sum_{I\in\mathcal{J}_{k-j-1}(J)}\left<|I|^{\frac{1}{2}}\left<U\right>_I^{\frac{1}{2}}M_Jf,|I|^{\frac{1}{2}}\left<U\right>_I^{\frac{1}{2}}M_Jf\right>
\\&=\sum_{J^{\prime} \in
\mathcal{J}_{j-1}}\sum_{J\in\mathcal{J}(J^\prime)}\sum_{I\in\mathcal{J}_{k-j-1}(J)}\left<|I|^{\frac{1}{2}}\left<U\right>_I^{\frac{1}{2}}\langle
U \rangle_J^{-\frac{1}{2}}\langle U
\rangle_J^{\frac{1}{2}}M_Jf,|I|^{\frac{1}{2}}\left<U\right>_I^{\frac{1}{2}}\langle
U \rangle_J^{-\frac{1}{2}}\langle U
\rangle_J^{\frac{1}{2}}M_Jf\right>
\\&=\sum_{J^{\prime} \in
\mathcal{J}_{j-1}}\sum_{J\in\mathcal{J}(J^\prime)}\left<\sum_{I\in\mathcal{J}_{k-j-1}(J)}|I|\left<U\right>_I\langle
U \rangle_J^{-\frac{1}{2}}\langle U
\rangle_J^{\frac{1}{2}}M_Jf,\langle U
\rangle_J^{-\frac{1}{2}}\langle U \rangle_J^{\frac{1}{2}}M_Jf\right>
\\&=\sum_{J^{\prime} \in
\mathcal{J}_{j-1}}\sum_{J\in\mathcal{J}(J^\prime)}\int_{\mathcal{J}_{k-j-1}(J)}||U^{\frac{1}{2}}(x)\langle
U \rangle_J^{-\frac{1}{2}}\langle U
\rangle_J^{\frac{1}{2}}M_Jf||^2dx
\end{align*}

We now apply H\"{o}lder's inequality with $p$ such that $2p$ is the
$r$ from our reverse H\"{o}lder inequality on $U$. Then the above
expression is less than or equal to
\[ \sum_{J^{\prime} \in
\mathcal{J}_{j-1}}\sum_{J\in\mathcal{J}(J^\prime)}\left(\int_{\mathcal{J}_{k-j-1}(J)}||U^{\frac{1}{2}}(x)\langle
U \rangle_J^{-\frac{1}{2}}\langle U
\rangle_J^{\frac{1}{2}}M_Jf||^{2p}dx\right)^{\frac{1}{p}}|\mathcal{J}_{k-j-1}(J)|^\frac{1}{q}.\]

We now use the fact that we are working with a decaying stopping
time to see that this is less than or equal to
\[ \sum_{J^{\prime} \in
\mathcal{J}_{j-1}}\sum_{J\in\mathcal{J}(J^\prime)}\left(\int_{\mathcal{J}_{k-j-1}(J)}||U^{\frac{1}{2}}(x)\langle
U \rangle_J^{-\frac{1}{2}}\langle U
\rangle_J^{\frac{1}{2}}M_Jf||^{2p}dx\right)^{\frac{1}{p}}d^\frac{k-j-1}{q}|J|^{\frac{1}{q}}\]
where $0 < d < 1.$

Now we apply the reverse H\"{o}lder inequality \ref{revh} with
vector $\langle U \rangle_J^{\frac{1}{2}}M_Jf$ to obtain that this
is less than or equal to
\[ \sum_{J^{\prime} \in
\mathcal{J}_{j-1}}\sum_{J\in\mathcal{J}(J^\prime)}||\langle U
\rangle_J^{\frac{1}{2}}M_Jf||^{2}C^{\frac{1}{p}}|J|^{\frac{1}{p}}d^\frac{k-j-1}{q}|J|^{\frac{1}{q}}
= d^{\frac{k-j-1}{q}} C^{\frac{1}{p}}\sum_{J^{\prime} \in
\mathcal{J}_{j-1}}\int_{\cup \mathcal{J}(J^\prime)}
||U(x)^{\frac{1}{2}}M_Jf||^2\]
\[= d^{\frac{k-j-1}{q}} C^{\frac{1}{p}}\sum_{J^{\prime} \in
\mathcal{J}_{j-1}}\int_{\cup \mathcal{J}(J^\prime)} ||S_jf||^2 .\]
This is our core estimate.

To apply Cotlar's Lemma, consider
\[\left<S_k^*S_jf,g\right>_{L^2} =  \left<S_jf,S_kg\right>_{L^2} =
\int_{\cup \mathcal{J}_{k-1}}
\left<S_jf(x),S_kg(x)\right>_{\mathbb{C}^n}dx \leq \int_{\cup
\mathcal{J}_{k-1}}
||S_jf(x)||_{\mathbb{C}^n}||S_kg(x)||_{\mathbb{C}^n}dx\leq\]

\[\left\{\int_{\cup
\mathcal{J}_{k-1}}
||S_jf(x)||^2_{\mathbb{C}^n}\right\}^{\frac{1}{2}}\left\{\int_{\cup
\mathcal{J}_{k-1}}||S_kg(x)||^2_{\mathbb{C}^n}dx\right\}^{\frac{1}{2}}\]

This is true as the support of $S_kf$ is contained in
$\mathcal{J}_{k-1}$ and by Cauchy-Schwartz. We have just dealt with
the relevant bounds for the two factors at the end of this chain of
inequalities.

Also note that \[\left<S_kS_j^*f,g\right>_{L^2} =
\left<S_j^*f,S_k^*g\right>_{L^2}=
\left<\left(S\triangle_j\right)^*f,\left(S\triangle_k\right)^*g\right>_{L^2}=
\left<\triangle_jS^*f,\triangle_kS^*g\right>_{L^2} = 0\] as the
$\triangle_i$ are self adjoint orthogonal projections. This finishes
the proof of Theorem \ref{square}.
\end{proof}

\begin{rem}
The proof of Corollary \ref{maint} also follows from the embedding
theorem of Nazarov and Treil, Theorem \ref{naztreil}. Ideally we
would like to prove this independently of their theorem however we
were unable to do this. $M_V^{-\frac{1}{2}}T_\sigma
M_U^{\frac{1}{2}}$ can be written as
\[M_V^{-\frac{1}{2}}D^{-1}_{V^{-1}} D_{V^{-1}}T_\sigma D_{U}
D^{-1}_{U}M_U^{\frac{1}{2}}.\]  Note that $T_\sigma$ commutes with
$D_{V^{-1}}$ and we can estimate the norm as follows
\[||M_V^{-\frac{1}{2}}T_\sigma M_U^{\frac{1}{2}}|| = ||M_V^{-\frac{1}{2}}D^{-1}_{V^{-1}} D_{V^{-1}}T_\sigma D_{U} D^{-1}_{U}M_U^{\frac{1}{2}}|| \]\[\leq ||M_V^{-\frac{1}{2}} D^{-1}_{V^{-1}}|| ||T_\sigma|| ||D_{V^{-1}}D_{U}|| ||D^{-1}_{U}M_U^{\frac{1}{2}}||.\]
We need conditions on $U$ and $V$ that imply that the operators
$M_V^{-\frac{1}{2}}D^{-1}_{V^{-1}}$, $D_{V^{-1}}D_{U}$ and
$D^{-1}_{U}M_U^{\frac{1}{2}}$ are bounded. Theorem \ref{naztreil}
immediately gives us that $D^{-1}_{U}M_U^{\frac{1}{2}}$ is bounded.
This theorem also applies to $M_V^{-\frac{1}{2}}D^{-1}_{V^{-1}}$ if
we note that its adjoint is $D^{-1}_{V^{-1}}M_V^{-\frac{1}{2}}$. All
we need to show now is that under the hypothesis $D_{V^{-1}}D_{U}$
is a bounded operator. This follows from the joint $A_2$ condition.
\end{rem}

\section{Application to the Hilbert Transform}

As well as showing that the martingale transforms are uniformly
bounded under the conditions of the two main theorems we can also
show that the dyadic shift, $\Sha$ defined below, will be bounded
and hence the Hilbert transform by way of S. Petermichl's averaging
techniques \cite{petermichl},\cite{ptv}.

\begin{defn}\label{shift}
The \emph{dyadic shift} $\Sha$ with respect to the standard dyadic
grid is the operator given by
\[\Sha f = \Sha \sum_{I \in\mathcal{D}}f_Ih_I=\sum_{I\in\mathcal{D}} \langle f,
 h_{I_+} - h_{I_-}\rangle h_I,\] where $f$ is supported on the
 unit interval and has finite Haar expansion.
\end{defn}

\begin{defn}
Define the operator $D_V^+$ as
\[D_V^+:f = \sum_{I\in\mathcal{D}}f_Ih_I \mapsto \sum_{I\in\mathcal{D}} \langle V
\rangle^{\frac{1}{2}}_{I_+} f_I h_I\] and the operator $D_V^-$  as
\[D_V^-:f = \sum_{I\in\mathcal{D}}f_Ih_I \mapsto \sum_{I\in\mathcal{D}} \langle V
\rangle^{\frac{1}{2}}_{I_-} f_I h_I,\] for $f \in L^2(\mathbb{C}^n)$
with finite Haar expansion.

\end{defn}

If we split the shift operator into a sum of two operators, each of
which is bounded,

\[\Sha f = (\Sha_1 + \Sha_2)f = \sum_{I\in \mathcal{D}} \left(\langle f, h_{I_+}\rangle h_I\right) - \sum_{I\in \mathcal{D}} \left(\langle f, h_{I_-}\rangle h_I\right)\]

We can then check that $D_{V^{-1}}^+\Sha_1 D_{V^{-1}}^{-1} = \Sha_1$
and $ D_{V^{-1}}^-\Sha_2 D_{V^{-1}}^{-1} = \Sha_2$ As before we can
estimate $||M_U^{\frac{1}{2}} \Sha M_{V^{-1}}^{\frac{1}{2}}||,$
\[ || M^{\frac{1}{2}}_U (\Sha_1 + \Sha_2) M^{\frac{1}{2}}_{V^{-1}}|| = || M^{\frac{1}{2}}_U \left(D_{V^{-1}}^+\Sha_1 D_{V^{1}}^{-1} + D_{V^{-1}}^-\Sha_2 D_{V^{-1}}^{-1}\right) M^{\frac{1}{2}}_{V^{-1}}||  \]
\[ \leq \left(|| M^{\frac{1}{2}}_U \left(D_{V^{-1}}^+|| ||\Sha_1||\right) + || M^{\frac{1}{2}}_U \left(D_{V^{-1}}^-|| ||\Sha_2||\right)\right)
||D^{-1}_{V^{-1}}M^{\frac{1}{2}}_{V^{-1}}||.
\]
We have already dealt with the boundedness of the third operator and
it is known that $\Sha_1$ and $\Sha_2$ are bounded on unweighted
$L^2$. This leaves the operators $M^{\frac{1}{2}}_U D_{V^{-1}}^+$
and $M^{\frac{1}{2}}_U D_{V^{-1}}^-$.

\[ ||(D_{V^{-1}}^+)M^{\frac{1}{2}}_U f||^2=\langle M^{\frac{1}{2}}_U
(D_{V^{-1}}^+)^2M^{\frac{1}{2}}_U f,f\rangle  \]\[=\langle
(D_{V^{-1}}^+)^2M^{\frac{1}{2}}_U f,M^{\frac{1}{2}}_U
f\rangle=\langle \sum_{I\in\mathcal{D}} \langle V^{-1} \rangle_{I_+}
(U^{\frac{1}{2}}f)_I h_I,M^{\frac{1}{2}}_U f\rangle\]

\[=\sum_{I\in\mathcal{D}}\langle  \langle V^{-1} \rangle_{I_+}
(U^{\frac{1}{2}}f)_I h_I,M^{\frac{1}{2}}_U f\rangle
=\sum_{I\in\mathcal{D}}\frac{1}{|I_+|}\int_{I_+}\langle  V^{-1}(x)
(U^{\frac{1}{2}}f)_I h_I,M^{\frac{1}{2}}_U f\rangle dx\]

\[=\sum_{I\in\mathcal{D}}\frac{1}{|I_+|}\int_{I_+}\langle  V^{-1}(x)
(U^{\frac{1}{2}}f)_I h_I,(U^{\frac{1}{2}}f)_I h_I\rangle dx \leq
\sum_{I\in\mathcal{D}}\frac{1}{|I_+|}\int_{I}\langle  V^{-1}(x)
(U^{\frac{1}{2}}f)_I h_I,M^{\frac{1}{2}}_U f\rangle dx\]

\[= \sum_{I\in\mathcal{D}}\frac{2}{|I|}\int_{I}\langle  V^{-1}(x)
(U^{\frac{1}{2}}f)_I h_I,M^{\frac{1}{2}}_U f\rangle dx = 2\langle
M^{\frac{1}{2}}_U (D_{V^{-1}})^2M^{\frac{1}{2}}_U f,f\rangle.\] The
first inequality is true because we are integrating a positive
function,

$\langle  V^{-1}(x) (U^{\frac{1}{2}}f)_I h_I,(U^{\frac{1}{2}}f)_I
h_I\rangle = \langle  V^{-\frac{1}{2}}(x) (U^{\frac{1}{2}}f)_I
h_I,V^{-\frac{1}{2}}(x)(U^{\frac{1}{2}}f)_I h_I\rangle$, over a
larger interval. The second last equality is due to the fact that
$|I_+| = \frac{1}{2}|I|$. The boundedness of $M^{\frac{1}{2}}_U
D_{V^{-1}}^+$ then follows from our previous bounding of
$M^{\frac{1}{2}}_U D_{V^{-1}}$ and taking adjoints where
appropriate. For $M^{\frac{1}{2}}_U D_{V^{-1}}^-$ the proof is
similar.

\begin{defn}
Instead of this canonical dyadic grid we can define the shift
operator, $\Sha^{\beta,r}$, on the grid $ \mathbb{D}_{r,\beta} =
\left\{r2^m\left([0,1) + l +
\sum_{n<m}2^{i-n}\beta_i\right)\right\}_{l,m \in \mathbb{Z}}$:

\begin{equation*}\label{shift2}\Sha^{\beta,r} f = \Sha^{\beta,r}
\sum_{I
\in\mathbb{D}^{\beta,r}}f_Ih_I=\sum_{I\in\mathbb{D}^{\beta,r}}
\langle f,
 h_{I_+} - h_{I_-}\rangle h_I,\end{equation*}
\end{defn}
The shift operators defined with respect to these dyadic grids  will
be bounded $L^2(V) \rightarrow L^2(U)$ given the joint $A_2$
condition is satisfied, $U$ satisfies the reverse H\"{o}lder
condition and $V^{-1}$ the $A_{2,0}$ condition, all on this new
lattice. The resulting estimate for the norm will be independent of
the lattice.

Assuming the joint $A_2$ condition, that $U$ satisfies the reverse
H\"{o}lder condition and $V$ the $A_{2,0}$ condition, all on
arbitrary intervals, allows us to estimate the norm of the Hilbert
transform in terms of these translated and dilated Haar shifts using
the results from \cite{petermichl} and \cite{ptv}.

\begin{thm}

Let $U$ and $V$ be matrix weights satisfying the joint $A_2$
condition
\[ \left<V^{-1}\right>^{\frac{1}{2}}_I\left<U\right>_I\left<V^{-1}\right>^{\frac{1}{2}}_I < C\] for all  intervals $I$, where $C$ is a constant multiple of the identity.
If $V^{-1} \in A_{2,0}$ and $U$ satisfies the matrix reverse
H\"{o}lder inequality, then the Hilbert Transform is bounded from
$L^2(V)$ to $L^2(U)$.

\end{thm}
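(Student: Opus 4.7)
The plan is to bootstrap from the uniform boundedness of the translated-dilated dyadic shifts $\Sha^{\beta,r}$ that was established immediately before this theorem, and to combine it with S. Petermichl's representation of the Hilbert transform as an average of such shifts over the parameter space of dyadic grids, following \cite{petermichl} and \cite{ptv}. Since the whole preceding argument goes through for each grid separately, only the averaging step is new.

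First, I would observe that all three hypotheses of the theorem are imposed on arbitrary intervals, not only on those of the standard grid $\mathcal{D}$. Consequently, for every admissible pair $(\beta,r)$ the grid $\mathbb{D}_{r,\beta}$ inherits the joint $A_2$ condition, the reverse H\"older condition on $U$, and the $A_{2,0}$ condition on $V^{-1}$ with exactly the same constants. Reading the proof of Theorem \ref{square} and the decomposition $\Sha=\Sha_1+\Sha_2$ with $D^{\pm}_{V^{-1}}$ on the new lattice (every ingredient — the stopping time of Lemma \ref{dec}, Cotlar's Lemma, and the applications of Theorem \ref{naztreil} — depends only on the \emph{dyadic} structure of the chosen grid) yields the uniform estimate
\[ \bigl\| M_U^{1/2}\,\Sha^{\beta,r}\,M_{V^{-1}}^{1/2} \bigr\|_{L^2\to L^2}\leq C, \]
where $C$ depends only on the joint $A_2$, reverse H\"older and $A_{2,0}$ constants, and is independent of $\beta$ and $r$.

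Second, I would apply the Petermichl averaging identity, which writes the Hilbert transform (up to a nonzero constant) as
\[ H \;=\; c \int \Sha^{\beta,r}\,d\mu(\beta,r) \]
for a suitable probability measure $\mu$ on the parameter space of dyadic grids. Conjugating this identity by $M_U^{1/2}$ on the left and $M_{V^{-1}}^{1/2}$ on the right, and applying Minkowski's integral inequality to the resulting operator-valued integral, gives
\[ \bigl\|M_U^{1/2}\,H\,M_{V^{-1}}^{1/2}\bigr\|_{L^2\to L^2}\;\leq\; |c|\sup_{\beta,r}\bigl\|M_U^{1/2}\,\Sha^{\beta,r}\,M_{V^{-1}}^{1/2}\bigr\|_{L^2\to L^2}, \]
and the right-hand side is finite by the first step. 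This is equivalent to the boundedness of $H:L^2(V)\to L^2(U)$.

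The only genuine obstacle is verifying that the estimates for the translated-dilated shifts really do transfer verbatim to every $\mathbb{D}_{r,\beta}$; this amounts to checking that no step of the earlier proof used the specific choice of grid beyond its being dyadic, which is apparent by inspection once one has insisted that the hypotheses hold on \emph{all} intervals. Everything else — Petermichl's representation and Minkowski's inequality — is standard and off-the-shelf, so no new technical input is required.
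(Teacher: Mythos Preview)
Your proposal is correct and follows essentially the same route as the paper: the paper also invokes the uniform bound on $M_U^{1/2}\Sha^{\beta,r}M_{V^{-1}}^{1/2}$ (obtained because the hypotheses hold on all intervals, hence on every grid $\mathbb{D}_{r,\beta}$) and then passes through Petermichl's averaging formula. The only cosmetic difference is that the paper implements the averaging step via the bilinear form $\langle M_U^{1/2}HM_V^{-1/2}f,g\rangle$ rather than citing Minkowski's integral inequality, but these are equivalent.
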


\begin{proof}
\[ \left|\left\langle M_U^{\frac{1}{2}} H M_V^{-\frac{1}{2}}  f,g \right\rangle\right|
= C\left|\left\langle M_U^{\frac{1}{2}}
\int_{\{0,1\}^{\mathbb{Z}}}\int_1^2 \Sha^{\beta,r}
M_V^{-\frac{1}{2}}  f \frac{dr}{r}d\mathbb{P}(\beta),g
\right\rangle\right|\]

\[ = C\left|\int_{\{0,1\}^{\mathbb{Z}}}\int_1^2 \left\langle M_U^{\frac{1}{2}} \Sha^{\beta,r}  M_V^{-\frac{1}{2}}  f ,g \right\rangle\frac{dr}{r}d\mathbb{P}(\beta)\right|
\]

\[\leq  C\int_{\{0,1\}^{\mathbb{Z}}}\int_1^2 \left|\left\langle M_U^{\frac{1}{2}} \Sha^{\beta,r}  M_V^{-\frac{1}{2}}  f ,g \right\rangle\right|
\frac{dr}{r}d\mathbb{P}(\beta)\]

\[\leq CC^*\int_{\{0,1\}^{\mathbb{Z}}}\int_1^2 \left|\left\langle f ,g \right\rangle\right|
\frac{dr}{r}d\mathbb{P}(\beta)\leq CC^*||f||||g||,\]

where $C$ is the proportion of the Hilbert Transform to the average
of the shift operators and $C^*$ is the uniform operator norm of the
shift operators.
\end{proof}

The heuristic for adapting our main argument to the case of the
dyadic shift can be applied to a more general class of operators,
\emph{band operators.}

\section{Application to band operators and certain singular integral
operators}

\begin{defn}
A band operator $T$ is a bounded operator on $L^2$ such that there
exists an integer $r > 0$ for which $\langle T h_I, h_J \rangle =0$
for all Haar functions $h_I$, $h_J$ where $J$ is at least a distance
of $r$ away from $I$. By distance we mean tree distance between
dyadic intervals where the tree is formed by connecting each
interval with its parent and children intervals.
\end{defn}
One crucial fact is that, for each $r$ there are only a finite
number of Haar basis elements $h_{\tilde{I}}$ less than tree
distance $r$ from $h_I$. Suppose there are $m$ Haar basis elements
less than $r$ away from each $h_I$ and we label these basis elements
$h_{I_i}$ for $i = 1 .. m$. Then our band operator T will be of the
form

\[f
\mapsto \sum_{I\in\mathcal{D}} \sum_{i=1}^m\phi(I,I_i)\langle f,
h_{I}\rangle h_{I_i},\] where $\phi$ is a function from $\mathcal{D}
\bigoplus \mathcal{D}$ to $\mathbb{C}$.

\begin{lem}
If we have a band operator $T$, written in the form
\[f
\mapsto \sum_{I\in\mathcal{D}} \sum_{i=1}^m\phi(I,I_i)\langle f,
h_{I}\rangle h_{I_i},\]

then the function $\phi:\mathcal{D} \bigoplus \mathcal{D}
\rightarrow \mathbb{C}$ is bounded.
\end{lem}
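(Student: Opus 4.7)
The plan is to exploit the boundedness of $T$ by testing it on a single Haar function. Since $\{h_J\}_{J\in\mathcal{D}}$ is an orthonormal basis for $L^2(\mathbb{R})$, applying the defining formula of $T$ to $f = h_J$ collapses the outer sum to the single term $I = J$, giving
\[ T h_J = \sum_{i=1}^m \phi(J, J_i) h_{J_i}. \]
The vectors $h_{J_i}$ are orthonormal, so by Parseval,
\[ \| T h_J \|_{L^2}^2 = \sum_{i=1}^m |\phi(J, J_i)|^2. \]

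Now I would use the hypothesis that $T$ is a bounded operator on $L^2$. Since $\|h_J\|_{L^2} = 1$, we have $\|T h_J\|_{L^2} \leq \|T\|$, which forces
\[ \sum_{i=1}^m |\phi(J, J_i)|^2 \leq \|T\|^2 \]
for every dyadic interval $J$. In particular, each individual coefficient satisfies $|\phi(J, J_i)| \leq \|T\|$ uniformly in $J$ and in $i = 1, \ldots, m$. For arguments $(I, K) \in \mathcal{D} \oplus \mathcal{D}$ with $K$ at tree distance greater than $r$ from $I$, the band property $\langle T h_I, h_K\rangle = 0$ forces $\phi(I, K) = 0$, so the bound $|\phi| \leq \|T\|$ extends trivially to the full domain $\mathcal{D} \oplus \mathcal{D}$.

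There is no serious obstacle here; the argument is a direct testing computation, and the only subtlety is observing that the orthonormality of the Haar system on the output side lets one read off each coefficient $\phi(I, I_i)$ individually (in fact with the stronger $\ell^2$-summable estimate above) from the operator norm of $T$. The finiteness of $m$, guaranteed by the band condition, is what makes the statement meaningful but is not needed to bound a single coefficient.
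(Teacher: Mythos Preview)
Your proof is correct and follows essentially the same testing idea as the paper: apply $T$ to a single Haar function $h_J$, read off the coefficients, and bound them by $\|T\|$. Your use of Parseval to get $\|Th_J\|^2=\sum_i|\phi(J,J_i)|^2$ is in fact cleaner than the paper's corresponding step, and your remark that $\phi$ vanishes off the band handles a point the paper leaves implicit.
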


\begin{proof}
Suppose that $\phi$ is unbounded, as $T$ is a bounded operator we
can choose $I$ and $I_i$ such that $\phi(I,I_i) > ||T||$. Then we
can see that \[||T h_I|| = ||\sum_{I\in\mathcal{D}}
\sum_{i=1}^m\phi(I,I_i)\langle h_I, h_{I}\rangle h_{I_i}|| =
||\sum_{i=1}^m\phi(I,I_i) h_{I_i}|| = \sum_{i=1}^m
|\phi(I,I_i)|||h_{I_i}|| > ||T||,\] condradicting our hypothesis
that $T$ is bounded.
\end{proof}

\begin{figure}[htp]
\includegraphics{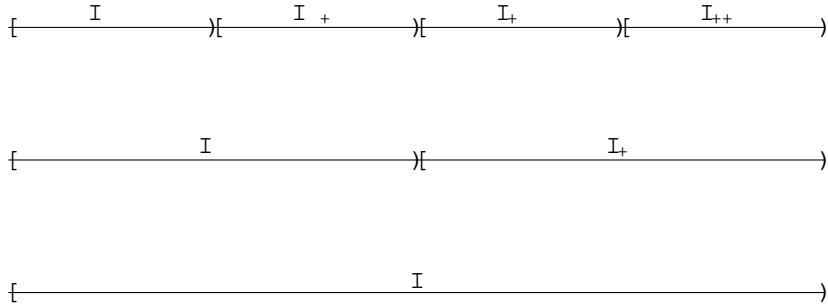}
\caption{A dyadic interval $I$ together with first and second
generation subintervals.}
\end{figure}

\begin{figure}[htp]
\includegraphics{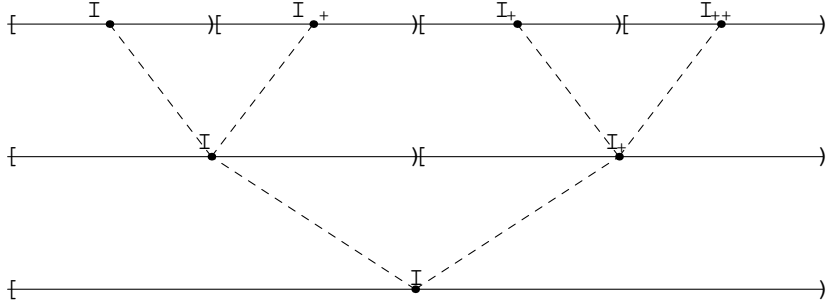}
\caption{The tree formed by connecting dyadic intervals to their
parents and children.}
\end{figure}

\begin{thm}
Let $U$ and $V$ be matrix weights satisfying the dyadic joint $A_2$
condition
\[ \left<V^{-1}\right>^{\frac{1}{2}}_I\left<U\right>_I\left<V^{-1}\right>^{\frac{1}{2}}_I < C\] for all dyadic intervals $I$, where $C$ is a constant multiple of the identity. If $V^{-1} \in A_{2,0}$ and $U$ satisfies the dyadic matrix reverse H\"{o}lder inequality, then
any band operator $T$ is bounded from $L^2(V)$ to $L^2(U)$. If $r$
is the maximum distance associated to the band operator then the
bound will depend only on $r$, the $L^2 \rightarrow L^2$ norm of the
operator and the $A_2$,$A_{2,0}$ and reverse H\"{o}lder constants
associated to the weights.
\end{thm}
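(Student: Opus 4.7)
The plan is to adapt the three-factor scheme used for the dyadic shift, applied separately to each of the finitely many shift patterns that make up the band operator. Decompose
\[
T f \,=\, \sum_{i=1}^{m} T_i f, \qquad T_i f \,=\, \sum_{I\in\mathcal{D}} \phi(I, I_i)\, f_I\, h_{I_i},
\]
where $m$ is bounded in terms of $r$ alone and the preceding lemma gives $|\phi(I, I_i)| \le \|T\|_{L^2 \to L^2}$. For each fixed $i$, the map $I \mapsto I_i$ is a bijection of $\mathcal{D}$ with inverse $J \mapsto \hat J$, so $T_i f = \sum_J \phi(\hat J, J)\, f_{\hat J}\, h_J$, and each $T_i$ is bounded on unweighted $L^2$.

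For each $T_i$ introduce the modified diagonal operator
\[
D^{(i)}_{V^{-1}} : \sum_J g_J h_J \,\longmapsto\, \sum_J \left\langle V^{-1}\right\rangle_{\hat J}^{1/2} g_J h_J.
\]
A direct computation, in the spirit of the identity $D^{+}_{V^{-1}}\Sha_1 D^{-1}_{V^{-1}} = \Sha_1$ used in the treatment of the dyadic shift, gives the intertwining $D^{(i)}_{V^{-1}}\, T_i\, D^{-1}_{V^{-1}} = T_i$. Hence
\[
M_U^{1/2} T_i M_{V^{-1}}^{1/2} \,=\, \bigl(M_U^{1/2} D^{(i)}_{V^{-1}}\bigr)\, T_i \,\bigl(D^{-1}_{V^{-1}} M_{V^{-1}}^{1/2}\bigr).
\]
The right factor is bounded on unweighted $L^2$ by Theorem \ref{naztreil}, and the middle factor $T_i$ is bounded on unweighted $L^2$ as noted. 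Everything therefore reduces to controlling $M_U^{1/2} D^{(i)}_{V^{-1}}$ on unweighted $L^2$.

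The plan for this last step is to dominate $\|M_U^{1/2} D^{(i)}_{V^{-1}} f\|_{L^2}$ by a constant $C(r)$ times the norm of the two-weighted square function $S = M_U^{1/2} D_{V^{-1}}$, which Theorem \ref{square} controls by $\|f\|_{L^2}$. When $\hat J \subset J$ at depth $k \le r$, the pointwise matrix inequality $\int_{\hat J} V^{-1} \le \int_J V^{-1}$ together with $|\hat J| = 2^{-k}|J|$ yields $\langle V^{-1}\rangle_{\hat J} \le 2^k \langle V^{-1}\rangle_J$, and the calculation proceeds exactly as for $\|D_{V^{-1}}^{\pm} M_U^{1/2}\|$ in the dyadic shift section. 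The main obstacle is the opposite case, in which $\hat J$ properly contains $J$ or $\hat J$ and $J$ share only a higher common ancestor: here one needs a reverse comparison $\langle V^{-1}\rangle_{\hat J} \le C(r)\langle V^{-1}\rangle_J$, which does not follow from pointwise positivity alone. My plan is to extract this from the hypothesis $V^{-1} \in A_{2,0}$, which via Lemma \ref{aor} supplies the matrix doubling needed to compare averages of $V^{-1}$ across a bounded number of tree levels; iterating at most $r$ times along the path from $J$ to $\hat J$ absorbs a constant depending only on $r$ and the $A_{2,0}$ constant of $V^{-1}$. Summing the resulting three-factor estimates over $i = 1, \dots, m$ delivers the theorem, with bound depending on $r$, $\|T\|_{L^2 \to L^2}$, and the $A_2$, $A_{2,0}$, and reverse H\"older constants.
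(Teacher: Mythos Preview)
Your three-factor scheme---split $T$ into finitely many elementary pieces, intertwine each with a modified diagonal operator, and reduce $M_U^{1/2}D^{(i)}_{V^{-1}}$ to the two-weighted square function $M_U^{1/2}D_{V^{-1}}$ by comparing averages of $V^{-1}$ across at most $r$ tree levels---is exactly the paper's argument. The comparison step is also the same in substance: the paper routes through the common ancestor $I'$ of $I$ and $I_i$, using positivity for $\frac{1}{|I_i|}\int_{I_i}\le \frac{2^r}{|I'|}\int_{I'}$ and then the scalar $A_\infty$ property of $\|V^{-1/2}\gamma\|^2$ (this is inequality~(\ref{uniformityx}) inside the proof of Lemma~\ref{aor}, not the reverse H\"older conclusion itself) to pass from $I'$ down to $I$.

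There is one genuine technical slip. Your assertion that $I\mapsto I_i$ is a bijection of $\mathcal D$ is not true for a general labelling of the $m$ neighbours: if for some $i$ the rule is ``$I_i$ is the parent of $I$'', the map is two-to-one, and then $\hat J$ is undefined and your operator $D^{(i)}_{V^{-1}}$ does not make sense. The paper sidesteps this by decomposing in the \emph{opposite} direction, indexing by target rather than source: it writes $T_i f=\sum_{I}\phi(I_i,I)\,f_{I_i}\,h_I$, so that $I_i$ is a well-defined function of the target $I$, and then $D^i_{V^{-1}}:\sum_I g_I h_I\mapsto \sum_I \langle V^{-1}\rangle_{I_i}^{1/2}g_I h_I$ is automatically well-defined and the intertwining $T_i D_{V^{-1}}=D^i_{V^{-1}}T_i$ holds with no injectivity hypothesis. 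With that one change your argument goes through verbatim.
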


\begin{proof}
Again we note that

\[Tf
= \sum_{I\in\mathcal{D}} \sum_{i=1}^m\phi(I,I_i)\langle f,
h_{I}\rangle h_{I_i},\] where $\phi$ is a function from $\mathcal{D}
\bigoplus \mathcal{D}$ to $\mathbb{C}$. $I$ and $I_i$ will always
share an ancestor less than $r$ generations away for each $i=1..m$.
In the case that $I_i$ is a descendant of $I$ then $I$ will be the
common ancestor. In the case where $I_i$ is an ancestor of $I$ then
$I_i$ will be the common ancestor. It is also possible to be in a
situation where neither of these are true but the intervals still
share a common ancestor.

We can split $T$ into a sum of $m$ bounded operators

\[ T = \sum_{i=1}^m T_i,\] where $T_i$ is the operator \[f \mapsto \sum_{I\in\mathcal{D}}\phi(I_i,I)\langle f,
h_{I_i}\rangle h_{I}.\] This sum is constructed so that for each
summand $T_i$ and Haar basis element $h_I$ there is exactly one Haar
coefficient,$\langle f,h_i\rangle$, being mapped to $h_I$. Due to
the nature of the band operator there are at most $m$ Haar
coefficients being mapped to each basis element and thus it is
possible to decompose $T$ into a finite sum of these operators.

We proceed to estimate $||M_U^{\frac{1}{2}} T
M_{V^{-1}}^{\frac{1}{2}}||.$ Note that \[T D_{V^{-1}} = \left(
\sum_{i=1}^m T_i\right) D_{V^{-1}} =  \sum_{i=1}^m D^i_{V^{-1}}T_i
\] where $D^i_{V^{-1}}$ is the operator
\[f \mapsto \sum_{I\in\mathcal{D}} \langle V^{-1}
\rangle^{\frac{1}{2}}_{I_i} f_{I} h_{I}.\]

So

\[||M_U^{\frac{1}{2}} T
M_{V^{-1}}^{\frac{1}{2}}|| = ||M_U^{\frac{1}{2}}
TD_{V^{-1}}D^{-1}_{V^{-1}} M_{V^{-1}}^{\frac{1}{2}}||=
||M_U^{\frac{1}{2}} \left( \sum_{i=1}^m D^i_{V^{-1}} T_i\right)
D^{-1}_{V^{-1}} M_{V^{-1}}^{\frac{1}{2}}|| \]\[\leq \left(
\sum_{i=1}^m||M_U^{\frac{1}{2}}  D^i_{V^{-1}}
T_i||\right)||D^{-1}_{V^{-1}} M_{V^{-1}}^{\frac{1}{2}}||\]
\[\leq \left( \sum_{i=1}^m||M_U^{\frac{1}{2}}  D^i_{V^{-1}}
||||T_i||\right)||D^{-1}_{V^{-1}} M_{V^{-1}}^{\frac{1}{2}}||\] We
know that each $T_i$ is bounded and we have already dealt with the
boundedness of $D^{-1}_{V^{-1}} M_{V^{-1}}^{\frac{1}{2}}$. So it
remains to bound each $M_U^{\frac{1}{2}} D^i_{V^{-1}} .$

So for any $f \in L^2$,

\[||M^{\frac{1}{2}}_U (D^i_{V^{-1}})f||^2 = \langle M^{\frac{1}{2}}_U (D^i_{V^{-1}})^2M^{\frac{1}{2}}_U f, f
\rangle = \langle (D^i_{V^{-1}})^2M^{\frac{1}{2}}_U f,
M^{\frac{1}{2}}_U f \rangle\]
\[= \left\langle\sum_{I\in\mathcal{D}} \langle V^{-1}
\rangle_{I_i} (U^{\frac{1}{2}}f)_{I} h_{I},M^{\frac{1}{2}}_U f
\right\rangle = \sum_{I\in\mathcal{D}}\left\langle \langle V^{-1}
\rangle_{I_i} (U^{\frac{1}{2}}f)_{I} h_{I},M^{\frac{1}{2}}_U f
\right\rangle\]
\[=\sum_{I\in\mathcal{D}}\frac{1}{|I_i|}\int_{I_i}\langle  V^{-1}(x)
(U^{\frac{1}{2}}f)_I h_I,M^{\frac{1}{2}}_U f\rangle dx \leq
\sum_{I\in\mathcal{D}}\frac{2^r}{|I'|}\int_{I'}\langle V^{-1}(x)
(U^{\frac{1}{2}}f)_I h_I,M^{\frac{1}{2}}_U f\rangle dx\] where $I'$
is the common ancestor of $I$ and $I_i$. This is true because each
term \[\langle V^{-1}(x) (U^{\frac{1}{2}}f)_I h_I,M^{\frac{1}{2}}_U
f\rangle = \langle
 V^{-\frac{1}{2}}(x)(U^{\frac{1}{2}}f)_I
h_I,V^{-\frac{1}{2}}(x)(U^{\frac{1}{2}}f)_Ih_I\rangle\] is positive.

We have seen before that if a matrix weight $U$ satisfies the dyadic
$A_{2,0}$ condition then for any vector $\gamma$ the scalar weight
$||U^{\frac{1}{2}}\gamma||^2$ will satisfy the scalar dyadic
$A_\infty$ condition. So if we have a dyadic interval $I$ and a
dyadic interval $J$ contained in $I$ such that the tree distance
between these two is less than $r$, i.e. $|I| \leq 2^r|J|$ then one
of the standard properties of $A_\infty$, see \cite{stein} page 196,
tells us that

\[\beta\int_I||U^{\frac{1}{2}}\gamma||^2 \leq
\int_J||U^{\frac{1}{2}}\gamma||^2\] for some $0<\beta<1$ bounded
away from $0,$ with the bound dependent only on $r$ and the
$A_\infty$ constant.

Using our hypothesis that $V^{-1} \in A_{2,0}$ we can see that

\[\sum_{I\in\mathcal{D}}\frac{2^r}{|I'|}\int_{I'}\langle V^{-1}(x)
(U^{\frac{1}{2}}f)_I h_I,M^{\frac{1}{2}}_U f\rangle dx =
\sum_{I\in\mathcal{D}}\frac{2^r}{|I'|}\int_{I'}\langle
V^{-\frac{1}{2}}(x) (U^{\frac{1}{2}}f)_I h_I,V^{-\frac{1}{2}}(x)
(U^{\frac{1}{2}}f)_I h_I\rangle dx\]
\[\leq
\sum_{I\in\mathcal{D}}\frac{2^r}{|I'|}\int_{I'}||V^{-\frac{1}{2}}(x)
(U^{\frac{1}{2}}f)_I|| ||h_I|| dx \leq
\sum_{I\in\mathcal{D}}\frac{2^r}{|I'|}\int_{I'}||V^{-\frac{1}{2}}(x)
(U^{\frac{1}{2}}f)_I||^2dx \int_{I'}||h_I||^2
dx\]\[=\sum_{I\in\mathcal{D}}\frac{2^r}{|I'|}\int_{I'}||V^{-\frac{1}{2}}(x)
(U^{\frac{1}{2}}f)_I||^2dx \leq
\sum_{I\in\mathcal{D}}\frac{2^r}{\beta|I|}\int_{I}||V^{-\frac{1}{2}}(x)
(U^{\frac{1}{2}}f)_I||^2dx\]
\[=\sum_{I\in\mathcal{D}}\frac{2^r}{\beta}||\left\{\frac{1}{|I|}\int_IV^{-1}(x)dx\right\}^{\frac{1}{2}}
(U^{\frac{1}{2}}f)_I||^2dx = \frac{2^r}{\beta}
||D_{V^{-1}}M_U^{\frac{1}{2}}f||^2.\]

This reduces the estimate of each $D^i_{V^{-1}}M_U^{\frac{1}{2}}$ to
$D_{V^{-1}}M_U^{\frac{1}{2}}$ which was dealt with in Theorem
\ref{square}.
\end{proof}

If $K$ is a function from $\mathbb{R} \setminus \{0\}$ to
$\mathbb{R}$ that is twice differentiable and such that the function
$x^3K(x)$ is almost everywhere bounded and the limit as $x
\rightarrow \infty$ of both $K(x)$ and the first derivative $K'(x)$
are $0$ then the following theorem due to Vagharshakyan's allows us
to apply our hypothesis to singular integral operators of
convolution type with such kernels $K$. Vagharshakyan's theorem
models singular integral operators with such kernels in terms of
translations and dilations of band operators.

\begin{thm}[Vagharshakyan]
If $T$ is a singular integral operator of convolution type with
kernel $K$ as defined above, then $T$ is a positive multiple of the
following operator

\[ f \mapsto \int_{\{0,1\}^{\mathbb{Z}}}\int_1^2 B^{\beta,r}f \frac{dr}{r}d\mathbb{P}(\beta),\]
where $B^{\beta,r}$ is a band operator defined in terms of the
dyadic grid $\mathbb{D}_{\beta,r}$ exactly as they are defined for
the canonical dyadic grid.

\end{thm}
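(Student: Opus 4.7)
The plan is to construct the coefficient function $\phi$ of the band operator $B^{\beta,r}$ directly from $K$ so that the average of the $B^{\beta,r}$ against the product measure $\tfrac{dr}{r}\otimes d\mathbb{P}(\beta)$ reproduces a positive multiple of $K(x-y)$. On the grid $\mathbb{D}_{\beta,r}$ I would take
\[ B^{\beta,r}f = \sum_{I \in \mathbb{D}_{\beta,r}}\ \sum_{J:\ d(I,J) \leq r} \phi^{\beta,r}(I,J)\, \langle f, h_I\rangle\, h_J, \]
with $\phi^{\beta,r}(I,J)$ chosen to be (a truncation of) the tensor Haar coefficient $\iint K(x-y)\,h_J(x)\,h_I(y)\,dx\,dy$ of the convolution kernel on this grid. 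The hypotheses on $K$ --- twice differentiability, $|x^3 K(x)|$ essentially bounded, and $K,K'\to 0$ at infinity --- are precisely what is needed to ensure that these coefficients are uniformly bounded (so $B^{\beta,r}$ is a genuine bounded band operator, by the earlier lemma) and that the off-band tail in the $d(I,J)>r$ regime is negligible.

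The main steps are then: (1) verify uniform boundedness of $\phi^{\beta,r}$ and control of the off-band tail using the decay of $x^3K(x)$ and of $K'$; (2) compute the averaged kernel
\[ \tilde K(x,y) = \int_{\{0,1\}^\mathbb{Z}}\int_1^2 B^{\beta,r}(x,y)\,\frac{dr}{r}\,d\mathbb{P}(\beta), \]
and exploit the fact that $\tfrac{dr}{r}\otimes d\mathbb{P}(\beta)$ is invariant under the natural action of translations and dyadic dilations on the family $\{\mathbb{D}_{\beta,r}\}$ to deduce $\tilde K(x,y) = \tilde K(x-y)$; (3) identify $\tilde K(x-y) = c\,K(x-y)$ by a Haar reconstruction argument, interchanging the sum over $(I,J)$ with the $(\beta,r)$ average by dominated convergence using the uniform bounds from step (1); and (4) pin down $c>0$ by testing the identity on a single non-negative bump in a symmetric configuration.

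The hard part will be step (3). On each individual grid $\mathbb{D}_{\beta,r}$ the kernel of $B^{\beta,r}$ is far from translation invariant --- it carries obvious dyadic artifacts at the grid boundaries --- and the claim is that these artifacts average to zero once one integrates over $\beta$ and $r$. Justifying this cancellation requires Fubini-type interchanges that succeed precisely because the smoothness and decay of $K$ supply an integrable dominating function for the tensor Haar expansion, together with a careful bookkeeping of the finite band width $r$. A secondary technical obstacle is showing that the resulting constant $c$ is strictly positive, so that one obtains a \emph{positive} multiple of $K$ as claimed; this should follow by evaluating both sides at a carefully chosen pair $(x,y)$ for which the Haar coefficients $\phi^{\beta,r}(I,J)$ have a definite sign across a set of grids of positive measure.
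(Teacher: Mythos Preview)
The paper does not prove this statement at all: it is quoted as a theorem of Vagharshakyan and used as a black box, exactly in the way Petermichl's averaging identity for the Hilbert transform is used earlier. There is therefore no ``paper's own proof'' to compare your proposal against; any proof you supply would be an addition rather than a reconstruction.

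On the substance of your sketch, there is a real difficulty with the specific construction you propose. If you set $\phi^{\beta,r}(I,J)=\iint K(x-y)h_J(x)h_I(y)\,dx\,dy$ and keep only the pairs with $d(I,J)\le r$, then each $B^{\beta,r}$ is simply the band truncation of $T$ itself in the Haar basis of the grid $\mathbb{D}_{\beta,r}$. The decay of $x^3K(x)$ gives you that the discarded off-band tail has small (but nonzero) operator norm; it does not make that tail vanish, nor does it make the averaged tail vanish. Consequently your step~(3) would have to show that the $(\beta,r)$-average of the off-band remainder is \emph{exactly} zero, and there is no mechanism in your outline for producing that cancellation. Vagharshakyan's actual argument proceeds differently: one first writes $K$ as a continuous superposition $K(x)=c\int_0^\infty t^{-2}\psi(x/t)\,dt$ for an explicitly constructed compactly supported $\psi$ with enough vanishing moments (this is where the hypotheses on $K$, $K'$ and $x^3K(x)$ enter), and then realises the convolution with $t^{-1}\psi(\cdot/t)$ as a fixed Haar shift on a grid of scale $t$. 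The band operator thus comes from $\psi$, not from the Haar matrix of $K$, and the averaging identity is exact by construction rather than by a cancellation argument. If you want to supply a proof here, that is the route to follow.
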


\begin{thm}

Let $U$ and $V$ be matrix weights satisfying the joint $A_2$
condition
\[ \left<V^{-1}\right>^{\frac{1}{2}}_I\left<U\right>_I\left<V^{-1}\right>^{\frac{1}{2}}_I < C\] for all  intervals $I$, where $C$ is a constant multiple of the identity.
If $V^{-1} \in A_{2,0}$ and $U$ satisfies the matrix reverse
H\"{o}lder inequality, then the singular integral operator of
convolution type with kernel $K$ is bounded from $L^2(V)$ to
$L^2(U)$.

\end{thm}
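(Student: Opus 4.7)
The plan is to reduce this theorem to the band-operator theorem proved immediately above, exactly as was done for the Hilbert transform. Because the hypothesized joint $A_2$, matrix reverse H\"older, and $A_{2,0}$ conditions are assumed on \emph{all} intervals (not merely on the standard dyadic grid), they automatically transfer to any translated and dilated dyadic lattice $\mathbb{D}_{\beta,r}$. Hence the previous theorem applies on each such lattice, and in fact applies uniformly: the bound produced there depends only on $r$, the unweighted $L^2 \to L^2$ norm of the band operator, and the weight constants, none of which change when we relabel the grid.

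First I would invoke Vagharshakyan's theorem to write
\[
Tf \;=\; C \int_{\{0,1\}^{\mathbb Z}} \int_1^2 B^{\beta,r} f \,\frac{dr}{r}\, d\mathbb P(\beta),
\]
for a band operator $B^{\beta,r}$ on the grid $\mathbb D_{\beta,r}$. Next I would pair against a test function $g$ and estimate
\[
\bigl|\langle M_U^{1/2} T M_{V^{-1}}^{1/2} f, g\rangle\bigr|
\;\leq\; C \int_{\{0,1\}^{\mathbb Z}} \int_1^2
\bigl|\langle M_U^{1/2} B^{\beta,r} M_{V^{-1}}^{1/2} f, g\rangle\bigr|
\,\frac{dr}{r}\, d\mathbb P(\beta),
\]
which is the verbatim analogue of the computation carried out in the Hilbert transform theorem.

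The previous band-operator theorem then provides a constant $C^*$, independent of $\beta$ and $r$, such that $\|M_U^{1/2} B^{\beta,r} M_{V^{-1}}^{1/2}\| \leq C^*$. Applying Cauchy--Schwarz inside the integral gives
\[
\bigl|\langle M_U^{1/2} T M_{V^{-1}}^{1/2} f, g\rangle\bigr|
\;\leq\; C C^* \|f\|\,\|g\|\int_{\{0,1\}^{\mathbb Z}} \int_1^2 \frac{dr}{r}\, d\mathbb P(\beta)
\;\leq\; C'\|f\|\,\|g\|,
\]
so taking the supremum over $g$ yields the desired boundedness of $T$ from $L^2(V)$ to $L^2(U)$.

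The only real subtlety, and the step I would treat most carefully, is verifying that the band-operator constant is genuinely uniform in the grid parameters $\beta$ and $r$. This hinges on two points: the $L^2\to L^2$ norm of $B^{\beta,r}$ is the same on every grid (since these grids are obtained by translation and dilation, which are isometries of unweighted $L^2$), and the weight constants in the joint $A_2$, reverse H\"older, and $A_{2,0}$ hypotheses are grid-independent by assumption. Once this uniformity is in place, Fubini's theorem legitimates interchanging the integral over $(\beta,r)$ with the inner product, and the argument closes exactly as in the Hilbert transform case.
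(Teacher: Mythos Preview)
Your proposal is correct and follows essentially the same argument as the paper: Vagharshakyan's representation, pairing with a test function, moving the absolute value inside the average, and invoking the uniform band-operator bound. Your added discussion of why the constant $C^*$ is grid-independent is more explicit than the paper's treatment of that point but otherwise the approaches coincide.
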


\begin{proof}
\[ \left|\left\langle M_U^{\frac{1}{2}} T M_V^{-\frac{1}{2}}  f,g \right\rangle\right|
= \tilde{C}\left|\left\langle M_U^{\frac{1}{2}}
\int_{\{0,1\}^{\mathbb{Z}}}\int_1^2 B^{\beta,r}  M_V^{-\frac{1}{2}}
f \frac{dr}{r}d\mathbb{P}(\beta),g \right\rangle\right|\]

\[ = \tilde{C}\left|\int_{\{0,1\}^{\mathbb{Z}}}\int_1^2 \left\langle M_U^{\frac{1}{2}} B^{\beta,r}  M_V^{-\frac{1}{2}}  f ,g \right\rangle\frac{dr}{r}d\mathbb{P}(\beta)\right|
\]

\[\leq  \tilde{C}\int_{\{0,1\}^{\mathbb{Z}}}\int_1^2 \left|\left\langle M_U^{\frac{1}{2}} B^{\beta,r}  M_V^{-\frac{1}{2}}  f ,g \right\rangle\right|
\frac{dr}{r}d\mathbb{P}(\beta)\]

\[\leq \tilde{C}C^*\int_{\{0,1\}^{\mathbb{Z}}}\int_1^2 \left|\left\langle f ,g \right\rangle\right|
\frac{dr}{r}d\mathbb{P}(\beta)\leq \tilde{C}C^*||f||||g||,\]

where $\tilde{C}$ is the constant multiple of the singular integral
operator corresponding to the average of the band operators and
$C^*$ is the operator norm of the band operators. Note by uniform
norm we mean that a particular band operator then defined with
respect to different dyadic grids will have the same operator norm.
\end{proof}

\bibliographystyle{amsplain}

\end{document}